\theoremstyle{plain}
\newtheorem{theorem}{Theorem}[section]
\newtheorem{proposition}[theorem]{Proposition}
\newtheorem{lemma}[theorem]{Lemma}
\newtheorem{corollary}[theorem]{Corollary}
\theoremstyle{definition}
\theoremstyle{remark}
\renewcommand{\thefootnote}{\arabic{footnote}}
\def\R{\mathbb R}% tap so thuc
\def\N{\mathbb N}% tap so tu nhien
\def\Om{\Omega}% Omega
\def\De{\Delta} % Delta
\def\si{\sigma}% sigma
\def\lam{\lambda}% lambda
\def\ep{\epsilon}% epsilon
\def\na{\nabla}% nabla
\def\pa{\partial}% dao ham rieng
\def\la{\langle} %goc trai
\def\ra{\rangle} %goc phai
\def\lt{\left}% trai
\def\rt{\right}% phai
\def\H{\mathbb H}
\def\B{\mathbb B}
\numberwithin{equation}{section}
\title{Second order Sobolev type inequalities in the hyperbolic spaces}
\author{Van Hoang Nguyen\footnote{Institute of Research and Development, Duy Tan University, Da Nang, Vietnam.}
}
\begin{document}
\maketitle

%% Classification and key words; note that the 2010 classification is used:

\renewcommand{\thefootnote}{}

\footnote{Email: \href{mailto: Van Hoang Nguyen <vanhoang0610@yahoo.com>}{vanhoang0610@yahoo.com}.}

\footnote{2010 \emph{Mathematics Subject Classification\text}: 26D10, 46E35.}

\footnote{\emph{Key words and phrases\text}: Poincar\'e--Sobolev inequality, Sobolev inequality, Adams inequality, Adams inequality with exact growth, Hyperbolic spaces, decreasing spherical symmetric rearrangement.}

\renewcommand{\thefootnote}{\arabic{footnote}}
\setcounter{footnote}{0}

\begin{abstract}
We establish several Poincar\'e--Sobolev type inequalities for the Lapalce--Beltrami operator $\Delta_g$ in the hyperbolic space $\H^n$ with $n\geq 5$. These inequalities could be seen as the improved second order Poincar\'e inequality with remainder terms involving with the sharp Rellich inequality or sharp Sobolev inequality in $\H^n$. The novelty of these inequalities is that it combines both the sharp Poincar\'e inequality and the sharp Rellich inequality or the sharp Sobolev inequality for $\Delta_g$ in $\H^n$. As a consequence, we obtain the Poincar\'e--Sobolev inequality for the second order GJMS operator $P_2$ in $\H^n$. In dimension $4$, we obtain an improvement of the sharp Adams inequality and an Adams inequality with exact growth for radial functions in $\H^4$.
\end{abstract}

\section{Introduction}
Let $\H^n$ denote the hyperbolic space of dimension $n$ with $n \geq 2$. In this paper, we shall use the Poincar\'e ball model for the hyperbolic spaces.  This is the unit ball
\[
\B^n =\{x = (x_1,\ldots,x_n) \in \R^n\, :\, |x| =(x_1^2 + \cdots + x_n^2)^{\frac12} < 1\},
\]
equipped with the usual Poincar\'e metric
\[
g(x) = \lt(\frac 2{1 -|x|^2}\rt)^2 \sum_{i=1}^n d x_i^2.
\]
The hyperbolic volume element with respect to $g$ is $dV_g = (\frac 2{1 -|x|^2})^n dx$. Let $d$ denote the geodesic distance with respect to $g$ in $\H^n$, then the distance from the origin to $x$ is $\rho(x):=d(0,x) = \ln \frac{1+ |x|}{1 -|x|}$. Let $\nabla_g$ denote the hyperbolic gradient with respect to $g$ in $\H^n$. We have $\na_g = (\frac{1 -|x|^2}2)^2 \na$, where $\na$ is the usual gradient in the Euclidean space $\R^n$. The Laplace-Beltrami operator with respect to $g$ in $\H^n$ is given by
\[
\Delta_g = \lt(\frac{1 -|x|^2}{2}\rt)^2 \Delta + (n-2) \frac{1 -|x|^2}2 \la x, \na \ra,
\]
where $\Delta$ and $\la \cdot, \cdot \ra$ denote the usual Laplace operator and the usual scalar product in $\R^n$ respectively. The spectral gap of $-\De_g$ on $L^2(\H^n)$ is $\frac{(n-1)^2}4$ (see, e.g., \cite{MS,KS}), i.e., 
\begin{equation}\label{eq:Poincare1}
\int_{\H^n} |\na_g u|_g^2 dV_g \geq \frac{(n-1)^2}4 \int_{\H^n} u^2 dV_g,\quad u \in C_0^\infty(\H^n),
\end{equation}
where we use $|\na_g u|_g = \sqrt{g(\na_g u, \na_g u)}$ for simplifying notation. The inequality \eqref{eq:Poincare1} is sharp and the sharp constant $\frac{(n-1)^2}4$ is never attained. The non-attainability of \eqref{eq:Poincare1} together with the sub-criticality of the operator $-\Delta_g  - \frac{(n-1)^2}4$ leaves a room for several improvements of the inequality \eqref{eq:Poincare1}. For examples, the reader could consult the paper \cite{BDGG,BGG,BG,BGGP} for the improvements of \eqref{eq:Poincare1} by adding the remainder terms concerning to the Hardy weight, i.e., 
\[
\int_{\H^n} |\na_g u|_g^2 dV_g -\frac{(n-1)^2}4 \int_{\H^n} u^2 dV_g \geq C \int_{\H^n} W u^2 dV_g,
\]
for some constant $C >0$ and the weight $W$ satisfying some appropriate conditions. Another improvement of \eqref{eq:Poincare1} was obtained by Mancini and Sandeep \cite{MS} for the dimension $n \geq 3$,
\[
\int_{\H^n} |\na_g u|_g^2 dV_g -\frac{(n-1)^2}4 \int_{\H^n} u^2 dV_g \geq C  \lt(\int_{\H^n} |u|^q dV_g\rt)^{\frac 2q},
\]
where $2 < q \leq \frac{2n}{n-2}$ and $C$ is a positive constant. The preceding inequality is equivalent to the Hardy--Sobolev--Maz'ya inequality in the half space (see \cite[Section $2.1.6$]{Maz'ya}). In the critical case $q = \frac{2n}{n-2}$, we have
\begin{equation}\label{eq:HSM1}
\int_{\H^n} |\na_g u|_g^2 dV_g - \frac{(n-1)^2}4 \int_{\H^n} u^2 dV_g \geq C_n\lt(\int_{\H^n} |u|^{\frac{2n}{n-2}} dV_g\rt)^{\frac{n-2}n},\quad u\in C_0^\infty(\H^n),
\end{equation}
where $C_n$ denotes the sharp constant for which \eqref{eq:HSM1} holds true. It was proved by Tertikas and Tintarev \cite{TT} for $n \geq 4$ that $C_n < S_1(n,2)$ where $S_1(n,2)$ is the sharp constant in the first order $L^2$-Sobolev inequality in $\R^n$ (see \cite{Aubin,Talenti}) and $C_n$ is attained. In the case $n=3$, it was shown by Benguria, Frank and Loss \cite{BFL} that $C_3 = S_1(3,2)$ and is not attained. Another proof of the non-attainability of $C_3$ was given by Mancini and Sandeep \cite{MS}. For the $L^p-$version of \eqref{eq:HSM1}, we refer the reader to \cite{VHN2018a}. In that paper, the author proved the following inequality
\begin{equation}\label{eq:Nguyen}
\int_{\R^n} |\na_g u|_g^p dV_g - \frac{(n-1)^p}{p^p} \int_{\H^n} |u|^p dV_g \geq S_1(n,p) \lt(\int_{\H^n} |u|^{\frac{np}{n-p}} dV_g\rt)^{\frac{n-p}n},\, u\in C_0^\infty(\H^n),
\end{equation}
for $n\geq 4$ and $\frac{2n}{n-1}\leq p < n$, where $S_1(n,p)$ is the best constant in the first order $L^p-$Sobolev inequality in $\R^n$ (see \cite{Aubin,Talenti}). Notice that $(\frac{n-1}p)^p$ is the spectral gap of the $p-$Laplacian in the hyperbolic space (see \cite{NgoNguyen2017}).

In this paper, we extend the inequality \eqref{eq:HSM1} to the second order derivative (i.e., the Laplace-Beltrami operator $\Delta_g$ with respect to $g$ in $\H^n$). Let us recall the following second order Poincar\'e inequality in $\H^n$ (see \cite{KS,NgoNguyen2017})
\begin{equation}\label{eq:Poincare2}
\int_{\H^n} (\Delta_g u)^2 dV_g \geq \frac{(n-1)^4}{16} \int_{\H^n} u^2 dV_g, \quad u\in C_0^\infty(\H^n).
\end{equation}
The constant $\frac{(n-1)^4}{16}$ is sharp and is never attained. The first main result of this paper is the following improvement of \eqref{eq:Poincare2} with the remainder involving with the Rellich type inequality. Let us recall the sharp Rellich inequality in $\H^n$ for $n \geq 5$ (see \cite{KO})
\begin{equation}\label{eq:RellichHn}
\int_{\H^n} (\Delta_g u)^2 dV_g \geq \frac{n^2(n-4)^2}{16} \int_{\H^n} \frac{u(x)^2}{\rho(x)^4} dV_g,\quad u\in C_0^\infty(\H^n),
\end{equation}
with the constant $\frac{n^2(n-4)^2}{16}$ being sharp. We shall prove the following theorem.

\begin{theorem}\label{Rellich}
Let $n \geq 5$. Then the following inequality holds
\begin{equation}\label{eq:improveRellich}
\int_{\H^n} (\Delta_g u)^2 dV_g - \frac{(n-1)^4}{16} \int_{\H^n} u^2 dV_g \geq \frac{n^2(n-4)^2}{16} \int_{\H^n} \frac{u(x)^2}{(\frac{V_g(B_g(0,\rho(x)))}{\si_n})^{\frac4n}} dV_g,
\end{equation}
for any $u\in C_0^\infty(\H^n)$, where $B_g(0,r)$ denotes the geodesic ball with center at $0$ and radius $r >0$ in $\H^n$ and $\si_n$ is the volume of unit ball in $\R^n$. Morover, the constant $\frac{n^2(n-4)^2}{16}$ in the right hand side of \eqref{eq:improveRellich} is sharp.
\end{theorem}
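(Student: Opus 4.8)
The plan is to reduce \eqref{eq:improveRellich} to a radial one–dimensional inequality, introduce the \emph{volume radius} $s=\bigl(V_g(B_g(0,\rho))/\si_n\bigr)^{1/n}$ as a new variable (so that $dV_g$ becomes $n\si_n s^{n-1}\,ds$ and the weight on the right becomes the flat power $s^{-4}$), and then complete a square adapted at once to the second order Poincar\'e inequality \eqref{eq:Poincare2} and to the Euclidean Rellich inequality. The residual of that square will reduce the whole problem to a first order improved Poincar\'e inequality, which is where the real work lies.

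\textbf{Reduction to radial functions.} Expanding $u=\sum_{k\geq0}u_k(\rho)Y_k$ in $L^2$–normalized spherical harmonics ($-\De_{\mathbb S^{n-1}}Y_k=\lambda_kY_k$, $\lambda_k=k(k+n-2)$), both sides of \eqref{eq:improveRellich} split orthogonally over $k$ because the weight depends only on $\rho$, and $\De_g(u_kY_k)=(\De_{g,0}u_k-\lambda_k\sinh^{-2}\!\rho\,u_k)Y_k$ where $\De_{g,0}$ is the radial Laplacian. For $k\geq1$ the cross term in $\int_{\H^n}(\De_g(u_kY_k))^2\,dV_g$ equals $2\lambda_k\int_0^\infty u_k(-\De_{g,0}u_k)\sinh^{n-3}\!\rho\,d\rho$, and one further integration by parts rewrites this as $\int_0^\infty(u_k')^2\sinh^{n-3}\!\rho\,d\rho+\int_0^\infty u_k^2\bigl((n-4)\sinh^{n-5}\!\rho\cosh^2\!\rho+\sinh^{n-3}\!\rho\bigr)d\rho$, which is $\geq0$ when $n\geq5$. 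Hence the $k$–th summand of the left side of \eqref{eq:improveRellich} dominates $\int_0^\infty(-\De_{g,0}u_k)^2\sinh^{n-1}\!\rho\,d\rho$, and it suffices to prove \eqref{eq:improveRellich} for radial $u$.

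\textbf{The change of variable and the square.} For radial $u$, with $s$ as above, $F(s):=\bigl(s'(\rho)\bigr)^2=(\sinh\rho/s)^{2(n-1)}$ satisfies $F\geq1$, $F(0)=1$ and $F'\geq0$ — all from the elementary identities $\frac{d}{d\rho}(\sinh^n\!\rho-s^n)=n\sinh^{n-1}\!\rho(\cosh\rho-1)\geq0$ and $\frac{d}{d\rho}\bigl(n\cosh\rho\int_0^\rho\sinh^{n-1}-\sinh^n\!\rho\bigr)=n\sinh\rho\int_0^\rho\sinh^{n-1}\geq0$. A direct computation gives $\De_gu=s^{1-n}(s^{n-1}Fu_s)_s$, so that $\int_{\H^n}|\na_gu|_g^2\,dV_g=n\si_n\int_0^\infty Fu_s^2\,s^{n-1}ds$ and \eqref{eq:Poincare1} reads $\int_0^\infty Fu_s^2s^{n-1}ds\geq\frac{(n-1)^2}4\int_0^\infty u^2s^{n-1}ds$. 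Choosing $a=\frac{n(n-4)}4$, $b=\frac{(n-1)^2}4$, expand $\int(\De_gu)^2s^{n-1}ds$ as $\int\bigl(\De_gu+\frac a{s^2}u+bu\bigr)^2s^{n-1}ds$ plus linear/cross terms; integration by parts gives $-\int\frac{u\De_gu}{s^2}s^{n-1}ds=\int\frac{Fu_s^2}{s^2}s^{n-1}ds+\int u^2F's^{n-4}ds+(n-4)\int\frac{Fu^2}{s^4}s^{n-1}ds$ and $-\int u\De_gu\,s^{n-1}ds=\int Fu_s^2s^{n-1}ds$. Using $F\geq1$, $F'\geq0$ and the sharp one–dimensional Hardy inequality $\int_0^\infty u_s^2s^{n-3}ds\geq\frac{(n-4)^2}4\int_0^\infty u^2s^{n-5}ds$ (valid because $n\geq5$), the terms carrying the weight $s^{-4}$ combine into exactly $\bigl(2a\cdot\frac{n(n-4)}4-a^2\bigr)\int\frac{u^2}{s^4}s^{n-1}ds=\frac{n^2(n-4)^2}{16}\int\frac{u^2}{s^4}s^{n-1}ds$, which is precisely why $a$ must equal $\frac{n(n-4)}4$.

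\textbf{The residual, the main obstacle, and sharpness.} After dropping the nonnegative square, it remains to show $2b\int Fu_s^2s^{n-1}ds-2ab\int u^2s^{n-3}ds-b^2\int u^2s^{n-1}ds\geq\frac{(n-1)^4}{16}\int u^2s^{n-1}ds$; since $\frac{b(n-1)^2}2-b^2=\frac{(n-1)^4}{16}$, this is \emph{equivalent} (divide by $2b$ and insert $a$) to the first order improved Poincar\'e inequality
\begin{equation*}
\int_{\H^n}|\na_gu|_g^2\,dV_g\geq\frac{(n-1)^2}4\int_{\H^n}u^2\,dV_g+\frac{n(n-4)}4\int_{\H^n}\frac{u(x)^2}{\bigl(V_g(B_g(0,\rho(x)))/\si_n\bigr)^{2/n}}\,dV_g,
\end{equation*}
that is, $\int_0^\infty Fu_s^2s^{n-1}ds\geq\frac{(n-1)^2}4\int_0^\infty u^2s^{n-1}ds+\frac{n(n-4)}4\int_0^\infty u^2s^{n-3}ds$. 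This is the main obstacle, and it genuinely mixes Poincar\'e with the Hardy weight (it cannot come from $F\geq1$ alone). Because $\frac{n(n-4)}4=\frac{(n-2)^2-4}4$ is strictly subcritical, the Schr\"odinger equation $-s^{1-n}(s^{n-1}Fw_s)_s=\bigl(\frac{(n-1)^2}4+\frac{n(n-4)}{4s^2}\bigr)w$ should admit a positive solution $w$ with $w\sim s^{-(n-4)/2}$ as $s\to0$ and $w\sim s^{-n/2}$ as $s\to\infty$ (a double indicial root at infinity, matching the growth $F\sim\frac{(n-1)^2}{n^2}s^2$ and the non–attainability of Poincar\'e); the ground–state substitution $u=w\psi$ then yields $\int Fu_s^2s^{n-1}ds-\bigl(\frac{(n-1)^2}4\int u^2s^{n-1}ds+\frac{n(n-4)}4\int u^2s^{n-3}ds\bigr)=\int Fw^2\psi_s^2s^{n-1}ds\geq0$. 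Establishing the existence, positivity and asymptotics of $w$ — by a Sturm–type comparison exploiting the monotonicity and growth of $F$ and the explicit indicial analysis — is the delicate point; combining it with the previous step proves \eqref{eq:improveRellich} (in fact with the extra nonnegative square on the right). Finally, for sharpness of $\frac{n^2(n-4)^2}{16}$, test \eqref{eq:improveRellich} on radial $u_\vep$ equal to $\phi(s/\vep)$ near the origin with $\phi\in C_0^\infty(0,1)$ fixed: since $F-1=O(\vep^2)$ and $F'=O(\vep)$ on the support, in the rescaled variable $\int(\De_gu_\vep)^2s^{n-1}ds=(1+o(1))\,\vep^{n-4}\int_0^\infty(\De_{\R^n}\phi)^2t^{n-1}dt$ (with $\De_{\R^n}$ the flat Laplacian) while $\int_{\H^n}u_\vep^2\,dV_g=O(\vep^n)$ is negligible against $\int_{\H^n}u_\vep^2\bigl(V_g(B_g)/\si_n\bigr)^{-4/n}dV_g\asymp\vep^{n-4}$; hence the quotient tends to $\int_0^\infty(\De_{\R^n}\phi)^2t^{n-1}dt\big/\int_0^\infty\phi^2t^{n-5}dt$, whose infimum over $\phi$ is the sharp radial Euclidean Rellich constant $\frac{n^2(n-4)^2}{16}$, so the constant in \eqref{eq:improveRellich} cannot be enlarged.
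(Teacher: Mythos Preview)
Your approach is genuinely different from the paper's. The paper reduces to radial functions via a Talenti-type comparison (symmetric decreasing rearrangement of $-\Delta_g u$, Section~2), then proves directly the second-order transfer inequality
\[
\int_{\H^n}(\Delta_g u)^2\,dV_g-\frac{(n-1)^4}{16}\int_{\H^n}u^2\,dV_g\ \geq\ \int_{\R^n}(\Delta u_e)^2\,dy
\]
for radial $u$ (Theorem~\ref{keytool}), by a long pointwise computation (Lemmas~\ref{expressionbyv'} and~\ref{keyestimate}), and finally applies the Euclidean Rellich inequality. Your route---spherical harmonics for the reduction, then a completion-of-the-square that peels off the Rellich weight and leaves a \emph{first-order} improved Poincar\'e inequality as residual---is more structural and entirely bypasses the heavy polynomial calculus of Lemma~\ref{keyestimate}. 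Your sharpness argument by localisation and scaling is also fine (the paper does not spell one out).

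However, there is a genuine gap: the residual first-order inequality
\[
\int_0^\infty F\,u_s^2\,s^{n-1}\,ds\ \geq\ \frac{(n-1)^2}{4}\int_0^\infty u^2\,s^{n-1}\,ds+\frac{n(n-4)}{4}\int_0^\infty u^2\,s^{n-3}\,ds
\]
is not proved. You propose a ground-state substitution based on a positive solution $w$ of the associated Schr\"odinger equation, but you only assert that such a $w$ ``should'' exist with the stated asymptotics; the Sturm-type comparison is never carried out, and this is exactly where the geometry of $F$ enters non-trivially (as you yourself observe, $F\geq1$ alone is insufficient).

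The gap can in fact be closed without any ODE analysis, using the first-order analogue of the paper's key tool (this is the content of \cite{VHN2018a}, cf.\ the pointwise estimate quoted before \eqref{eq:tofinish}): for radial $u$,
\[
\int_{\H^n}|\na_g u|_g^2\,dV_g-\frac{(n-1)^2}{4}\int_{\H^n}u^2\,dV_g\ \geq\ \int_{\R^n}|\na u_e|^2\,dy,
\]
which in your variables is $\int_0^\infty(F-1)\,u_s^2\,s^{n-1}\,ds\geq\frac{(n-1)^2}{4}\int_0^\infty u^2\,s^{n-1}\,ds$. Adding the Euclidean Hardy inequality $\int_0^\infty u_s^2\,s^{n-1}\,ds\geq\frac{(n-2)^2}{4}\int_0^\infty u^2\,s^{n-3}\,ds$ then yields your residual inequality with the strictly better constant $\frac{(n-2)^2}{4}=\frac{n(n-4)}{4}+1$. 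So your scheme does go through, but as written it rests on an unproved lemma whose cleanest proof is precisely the first-order version of the transfer machinery that the paper builds in the second-order case.
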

It is easy to check that the weight $W(x) =(\frac{V_g(B_g(0,\rho(x)))}{\si_n})^{-\frac4n}$ satisfies the following asymptotic estimates $W(x) \sim \rho(x)^{-4}$ as $x \to 0$ and
\[
W(x) \sim \lt(\frac{n}{n-1}\rt)^{-\frac 4n} (\sinh \rho(x))^{-\frac{4(n-1)}n}\quad \text{as}\quad |x| \to 1.
\]
Thus in a neighborhood of the origin, the right hand side of \eqref{eq:improveRellich} is similar with the one of \eqref{eq:RellichHn}. However, we can chek that $W(x) < \rho(x)^{-4}$ for any $x\not=0$. Hence, it is interesting problem whether we can replace the right hand side of \eqref{eq:improveRellich} by the one of \eqref{eq:RellichHn}.

The second main result of this paper is an improvement of \eqref{eq:Poincare2} with the remainder involving with the sharp Sobolev inequality in $\H^n$. Let $S_2(n,2)$ denote the sharp constant in the second order $L^2-$Sobolev inequality in $\R^n$ with $n\geq 5$, i.e.,
\begin{equation}\label{eq:Sobolevorder2}
S_2(n,2)\lt(\int_{\R^n} |u|^{\frac{2n}{n-4}} dx\rt)^{\frac{n-4}n} \leq \int_{\R^n} (\De u)^2 dx.
\end{equation}
The sharp constant $S_2(n,2)$ was computed by Lieb \cite{Lieb} by proving the sharp Hardy--Littlewood--Sobolev inequality which is the dual version of \eqref{eq:Sobolevorder2}. The following theorem gives an extension of \eqref{eq:HSM1} for the Laplace--Beltrami opertor $\Delta_g$ on $\H^n$. It is interesting that we reach the sharp constant $S_2(n,2)$ in \eqref{eq:Sobolevorder2} in the right hand side of obtained inequality. 

\begin{theorem}\label{Sobolev}
Let $n\geq 5$. Then the following inequality holds
\begin{equation}\label{eq:improveSobolev}
\int_{\H^n} (\Delta_g u)^2 dV_g - \frac{(n-1)^4}{16} \int_{\H^n} u^2 dV_g \geq S_2(n,2) \lt(\int_{\H^n} |u|^{\frac{2n}{n-4}} dV_g\rt)^{\frac{n-4}n},
\end{equation}
for any function $u\in C_c^\infty(\H^n)$. The constant $S_2(n,2)$ in the right hand side of \eqref{eq:improveSobolev} is sharp.
\end{theorem}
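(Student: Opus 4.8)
The plan is to deduce \eqref{eq:improveSobolev} from the second order Sobolev inequality \eqref{eq:Sobolevorder2} on $\R^n$ by a symmetrization argument followed by a volume preserving change of variables $\H^n\to\R^n$; this is the same scheme that underlies Theorem~\ref{Rellich}, the difference being only the Euclidean input invoked at the final step. First I would reduce \eqref{eq:improveSobolev} to radial, nonincreasing functions. Given $u\in C_c^\infty(\H^n)$, let $w$ be the radial solution on $\H^n$ of $-\Delta_g w=(|\Delta_g u|)^{\#}$, where $\#$ denotes the decreasing spherical symmetric rearrangement with respect to $dV_g$. A Talenti type comparison for $-\Delta_g$ on $\H^n$ (valid because the coarea formula and the hyperbolic isoperimetric inequality hold) gives $|u|^{\#}\le w$ pointwise, whence $\int_{\H^n}(\Delta_g w)^2\,dV_g=\int_{\H^n}(\Delta_g u)^2\,dV_g$ and $\int_{\H^n}|w|^q\,dV_g\ge\int_{\H^n}|u|^q\,dV_g$ for all $q\ge 1$. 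Since $\int w^2\ge\int u^2$, the left side of \eqref{eq:improveSobolev} does not increase, and since $\int|w|^{2n/(n-4)}\ge\int|u|^{2n/(n-4)}$ the right side does not decrease, when $u$ is replaced by $w$; so it suffices to treat radial decreasing $u$.

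For such a $u=u(\rho)$, let $r=r(\rho)$ be defined by $\sigma_n r^n=V_g(B_g(0,\rho))$, equivalently $r^{n-1}\,dr=(\sinh\rho)^{n-1}\,d\rho$, and let $v$ be the radial function on $\R^n$ with $v(r(\rho))=u(\rho)$. This map is measure preserving, so $\int_{\R^n}|v|^q\,dy=\int_{\H^n}|u|^q\,dV_g$ for every $q$; in particular both the $L^2$ terms and the $L^{2n/(n-4)}$ terms are unchanged. The heart of the proof is the \emph{bridge inequality}
\[
\int_{\H^n}(\Delta_g u)^2\,dV_g-\frac{(n-1)^4}{16}\int_{\H^n}u^2\,dV_g\ \ge\ \int_{\R^n}(\Delta v)^2\,dy ,
\]
valid for radial decreasing $u$ and its transplant $v$; I expect this to be exactly the lemma that proves Theorem~\ref{Rellich} as well. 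Granting it, \eqref{eq:Sobolevorder2} applied to $v$ gives $\int_{\R^n}(\Delta v)^2\,dy\ge S_2(n,2)\big(\int_{\R^n}|v|^{2n/(n-4)}\,dy\big)^{(n-4)/n}=S_2(n,2)\big(\int_{\H^n}|u|^{2n/(n-4)}\,dV_g\big)^{(n-4)/n}$, which is \eqref{eq:improveSobolev}.

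The bridge inequality is the main obstacle. Using the ground state substitutions $\psi(\rho)=(\sinh\rho)^{\frac{n-1}{2}}u(\rho)$ and $\widetilde\psi(r)=r^{\frac{n-1}{2}}v(r)$, one reduces both sides to one dimensional integrals: on $L^2(d\rho)$ the operator $-\Delta_g$ acting on radial functions becomes $-\partial_\rho^2+\frac{(n-1)^2}{4}+U(\rho)$ with $U(\rho)=\frac{(n-1)(n-3)}{4\sinh^2\rho}\ge 0$, and on $L^2(dr)$ the operator $-\Delta$ becomes $-\partial_r^2+\widetilde U(r)$ with $\widetilde U(r)=\frac{(n-1)(n-3)}{4r^2}$; hence
\[
\int_{\H^n}(\Delta_g u)^2\,dV_g-\frac{(n-1)^4}{16}\int_{\H^n}u^2\,dV_g=\omega_{n-1}\int_0^\infty\Big[(\psi'')^2+\tfrac{(n-1)^2}{2}(\psi')^2+2U(\psi')^2+\big(U^2-U''+\tfrac{(n-1)^2}{2}U\big)\psi^2\Big]d\rho,
\]
and $\int_{\R^n}(\Delta v)^2\,dy=\omega_{n-1}\int_0^\infty[(\widetilde\psi'')^2+2\widetilde U(\widetilde\psi')^2+(\widetilde U^2-\widetilde U'')\widetilde\psi^2]\,dr$. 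Changing the variable in the last integral by $r=r(\rho)$, so that $\widetilde\psi\circ r=(r/\sinh\rho)^{\frac{n-1}{2}}\psi$, the bridge inequality becomes a one dimensional estimate whose favourable sign I expect to extract from the elementary fact $r(\rho)<\sinh\rho$ for all $\rho>0$ (a first order ODE comparison: $\frac{d}{d\rho}\sinh\rho=\cosh\rho>1=\frac{dr}{d\rho}$ at any point where $r=\sinh\rho$). Controlling the cross terms produced by the integrations by parts, and checking that the surplus $\frac{(n-1)^2}{2}\int[(\psi')^2+U\psi^2]\,d\rho$ absorbs the error coming from $r\neq\rho$, is the delicate part.

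Finally, for the sharpness of $S_2(n,2)$ I would test \eqref{eq:improveSobolev} with a fixed cut-off of the Euclidean extremal $(1+\lambda^2|x|^2)^{-(n-4)/2}$ of \eqref{eq:Sobolevorder2}, concentrating at the origin as $\lambda\to\infty$. Near $\rho=0$ the metric $g$ is asymptotically $4\,\delta$, so the conformal factors cancel in the scale invariant ratio and
\[
\frac{\int_{\H^n}(\Delta_g u_\lambda)^2\,dV_g}{\big(\int_{\H^n}|u_\lambda|^{\frac{2n}{n-4}}\,dV_g\big)^{\frac{n-4}{n}}}\ \longrightarrow\ S_2(n,2),
\]
while $\frac{(n-1)^4}{16}\int_{\H^n}u_\lambda^2\,dV_g=o(1)\cdot\big(\int_{\H^n}|u_\lambda|^{2n/(n-4)}\,dV_g\big)^{(n-4)/n}$ because $n\ge 5$ makes the truncated $L^2$ mass of the concentrating extremal tend to $0$. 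Hence the constant $S_2(n,2)$ on the right of \eqref{eq:improveSobolev} cannot be increased.
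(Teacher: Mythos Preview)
Your overall architecture is exactly the paper's: reduce to radial functions via a Talenti-type comparison on $\H^n$, pass to $\R^n$ by the volume-preserving transplant $u\mapsto u_e$ (your $v$), invoke the ``bridge inequality'' $\int_{\H^n}(\Delta_g u)^2\,dV_g-\frac{(n-1)^4}{16}\int_{\H^n}u^2\,dV_g\ge \int_{\R^n}(\Delta u_e)^2\,dy$, and then apply the sharp Euclidean second-order Sobolev inequality \eqref{eq:Sobolevorder2}. The bridge inequality is precisely Theorem~\ref{keytool}, and the paper's proof of Theorem~\ref{Sobolev} is just those three lines once Theorem~\ref{keytool} is in hand. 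Your sharpness argument via concentrating Euclidean extremals is the standard one and is fine (the paper does not write it out).

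Where you diverge is in your sketch of the bridge inequality itself. The paper does \emph{not} use the ground-state substitution $\psi=(\sinh\rho)^{(n-1)/2}u$ and a comparison of one-dimensional Schr\"odinger forms. Instead it works in the volume variable $s=V_g(B_g(0,\rho))$, derives an exact identity for $\int(\Delta_g u)^2-\int(\Delta u_e)^2$ by integration by parts (Lemma~\ref{expressionbyv'}), and then proves a pointwise polynomial inequality in $\sinh(F_n(s))$ and $\Phi_n$ (Lemma~\ref{keyestimate}) that, combined with the elementary bound $\sinh(F_n(s))^{4(n-1)}-(s/\sigma_n)^{4(n-1)/n}\ge((n-1)/n)^4(s/\sigma_n)^4$ and the weighted Euclidean Rellich and Hardy inequalities, yields exactly the constant $\frac{(n-1)^4}{16}$. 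Your approach would require showing that the ``surplus'' $\frac{(n-1)^2}{2}\int[(\psi')^2+U\psi^2]\,d\rho$ dominates the error from the change of variable $\rho\mapsto r$; you flag this as ``the delicate part'' but do not carry it out, and there is no obvious monotonicity or sign structure that closes it without computations of comparable difficulty to the paper's Lemma~\ref{keyestimate}. So the gap in your proposal is real: the heart of the matter is Theorem~\ref{keytool}, and your outline for it is not yet a proof.
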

In recent paper \cite{LuYanga}, Lu and Yang extended the inequality \eqref{eq:HSM1} to higher order derivatives. Their results was established for the GJMS operator $P_k$, $k\geq 1$ in $\H^n$ (see \cite{GJMS,Juhl})
\[
P_k = P_1(P_1+2) \cdots (P_1 +k(k-1)), \quad k \in \N,
\]
where $P_1 = -\De_g - \frac{n(n-2)}4$ is the conformal Laplacian in $\H^n$. The sharp Sobolev inequality for $P_k$ (see \cite{Liu} and \cite{LuYanga} for another proof) reads as
\begin{equation}\label{eq:SobolevPk}
\int_{\H^n} (P_ku) u dV_g \geq S_k(n,2) \lt(\int_{\H^n} |u|^{\frac{2n}{n-2k}} dV_g\rt)^{\frac{n-2k}n}, \, u\in C_0^\infty(\H^n),\, 1 \leq k < \frac n2,
\end{equation}
where $S_k(n,2)$ is the sharp constant in the $k-$th order Sobolev inequality in $\R^n$. In other hand, we have the following Poincar\'e inequality for $P_k$
\begin{equation}\label{eq:PoincarePk}
\int_{\H^n} (P_ku) u dV_g \geq \prod_{i=1}^k \frac{(2i-1)^2}4 \int_{\H^n} u^2 dV_g,\quad \quad u\in C_0^\infty(\H^n).
\end{equation}
The following inequality was proved by Lu and Yang for $n > 2k$ and $u\in C_0^\infty(\H^n)$
\begin{equation}\label{eq:LuYangineq}
\int_{\H^n} (P_ku) u dV_g - \prod_{i=1}^k \frac{(2i-1)^2}4 \int_{\H^n} u^2 dV_g \geq C_n \lt(\int_{\H^n} |u|^{\frac{2n}{n-2k}} dV_g\rt)^{\frac{n-2k}n},
\end{equation}
for some constant $C_n >0$. The proof of Lu and Yang is based on the sharp Hardy--Littlewood--Sobolev inequality in $\H^n$ which is different with the one in this paper. Moreover the constant $C_n$ is not explicit. As a consequence of our Theorem \ref{Sobolev}, we obtain the inequality of Lu and Yang for $k=2$ with an explicit constant $\frac{5S_2(n,2)}{(n-1)^2}$ (see Corollary \ref{cor} below).

In the dimension four, i.e., $n =4$, we have the following Adams type inequalities

\begin{theorem}\label{Adams}
For any $\lam < \frac{81}{16}$, then there exists a constant $C_\lam >0$ such that
\begin{equation}\label{eq:Adams}
\sup_{\int_{\H^4} |\De_g u|^2 dV_g - \lam \int_{\H^4} u^2 dV_g \leq 1} \int_{\H^4} \lt(e^{32\pi^2 |u|^2} -1\rt) dV_g \leq C_\lam.
\end{equation}
There exists a constant $C >0$ such that
\begin{equation}\label{eq:Adamsexact}
\sup_{\substack{\int_{\H^4} |\De_g u|^2 dV_g - \frac{81}{16} \int_{\H^4} u^2 dV_g \leq 1,\\ u \, \, \text{\rm is radial}}} \frac1{\int_{\H^4} u^2 dV_g} \int_{\H^4} \frac{e^{32\pi^2 u^2} -1}{(1+ |u|)^2}dV_g \leq C.
\end{equation}
Moreover, the power $2$ in the denominator is sharp in the sense that the supremum will be infinite if we replace it by any power $p < 2$.
\end{theorem}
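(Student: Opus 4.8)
\medskip
\noindent\textbf{Proof strategy.}
The plan is to reduce everything to a one-dimensional problem on the half-line and then run the Adams--Moser machinery there, treating the point $\rho=0$ like a small Euclidean ball in $\R^4$ (which is what forces the constant $32\pi^2$) and the point $\rho=\infty$ via the spectral gap. For \eqref{eq:Adams} the value of $C_\lambda$ is irrelevant, so I would first use a Talenti-type comparison for $\Delta_g$ (as in the derivation of \eqref{eq:Poincare2} in \cite{KS}, or equivalently O'Neil's rearrangement inequality applied to the radial resolvent kernel of $\Delta_g^2-\lambda$) to replace an arbitrary $u$ by a radially symmetric decreasing competitor without increasing $\int_{\H^4}(\Delta_g u)^2\,dV_g-\lambda\int_{\H^4}u^2\,dV_g$ nor decreasing the exponential integral; in \eqref{eq:Adamsexact} the competitors are radial from the outset. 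For radial $u=u(\rho)$ one has $\Delta_g u=u''+3\coth\rho\,u'$ and $dV_g=2\pi^2\sinh^3\rho\,d\rho$, and the ground-state substitution $u(\rho)=(\sinh\rho)^{-3/2}v(\rho)$ transforms the left-hand side of \eqref{eq:Adams}/\eqref{eq:Adamsexact} (at $\lambda=\tfrac{81}{16}$) into $2\pi^2\bigl(\|Lv\|_{L^2(0,\infty)}^2+\tfrac92\langle Lv,v\rangle\bigr)$ with $L=-\partial_\rho^2+\tfrac{3}{4\sinh^2\rho}\ge 0$, whose bottom of spectrum is $0$ and whose regular solution behaves like $\rho^{3/2}$ at the origin, while $\int_{\H^4}(e^{32\pi^2u^2}-1)\,dV_g=2\pi^2\int_0^\infty\bigl(e^{32\pi^2(\sinh\rho)^{-3}v^2}-1\bigr)\sinh^3\rho\,d\rho$; near $\rho=0$ this is exactly the Adams problem on a ball of $\R^4$, and near $\rho=\infty$ the factor $(\sinh\rho)^{-3}$ against $\sinh^3\rho$ is precisely critical.

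For \eqref{eq:Adams} with $\lambda<\tfrac{81}{16}$, combining \eqref{eq:Poincare2} with the constraint gives at once $\int_{\H^4}u^2\,dV_g\le(\tfrac{81}{16}-\lambda)^{-1}$ and hence $\int_{\H^4}(\Delta_g u)^2\,dV_g\le 1+\tfrac{81}{16}(\tfrac{81}{16}-\lambda)^{-1}$, so the constraint set is bounded in the full norm $\|\Delta_g\cdot\|_2^2+\|\cdot\|_2^2$. Then I would split the half-line (after the reduction above) at a radius $R=R(\lambda)$: on $[0,R]$ the relevant inequality is the classical Adams inequality on a bounded domain of $\R^4$, which delivers the full exponent $32\pi^2$ and a contribution bounded in terms of the $L^2$ and $\|\Delta_g u\|_2$ bounds only; on $[R,\infty)$ the strict inequality $\lambda<\tfrac{81}{16}$ improves the decay of $v$ enough that $e^{32\pi^2(\sinh\rho)^{-3}v^2}-1\le C(\sinh\rho)^{-3}v^2$ there, so the tail is controlled by $C\int_{\H^4}u^2\,dV_g\le C(\tfrac{81}{16}-\lambda)^{-1}$. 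Adding the two pieces yields \eqref{eq:Adams}.

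The heart of the theorem is \eqref{eq:Adamsexact}, where $\lambda=\tfrac{81}{16}$ is critical: now $\int_{\H^4}u^2\,dV_g$ is no longer bounded on the constraint set, the tail estimate above is borderline divergent (equivalently, the radial Green kernel of $\Delta_g^2-\tfrac{81}{16}$ decays exactly like $e^{-3\rho/2}$ up to polynomial factors, and its square is just barely not integrable against $dV_g$), so $\sup\int_{\H^4}(e^{32\pi^2u^2}-1)\,dV_g=+\infty$ and one must insert both the weight $(1+|u|)^{-2}$ and the normalisation $\|u\|_{L^2}^{-2}$. I would adapt the exact-growth scheme of Masmoudi--Sani: for radial decreasing $u\ge 0$ with $\int_{\H^4}(\Delta_gu)^2\,dV_g-\tfrac{81}{16}\int_{\H^4}u^2\,dV_g\le 1$, split $\H^4$ into $\{u\le 1\}$, where $\tfrac{e^{32\pi^2u^2}-1}{(1+u)^2}\le Cu^2$ so that part is $\le C\|u\|_{L^2}^2$ directly, and $\{u>1\}=B_g(0,r)$, where one must prove the local bound $\int_{B_g(0,r)}\tfrac{e^{32\pi^2u^2}}{u^2}\,dV_g\le C\|u\|_{L^2}^2$. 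This local bound is the main obstacle: it rests on a one-dimensional exact-growth lemma on $[0,R]$, obtained by truncating $v$ at suitably chosen levels, carefully tracking the logarithmic integral produced by $L$ against the factor $u^{-2}$, and using that $32\pi^2$ is exactly the Adams threshold so the borderline terms cancel; radiality of $u$ is used here to make the level sets geodesic balls and keep the bookkeeping one-dimensional. Finally, sharpness of the power $2$ follows by testing with the usual Adams sequence — truncated fundamental solutions of $\Delta_g^2-\tfrac{81}{16}$ concentrated at the origin, normalised on the constraint — along which, for any $p<2$, the factor $(1+|u|)^{-p}$ is too weak to absorb the growth of $e^{32\pi^2u^2}$ on the concentration scale, so $\|u\|_{L^2}^{-2}\int_{\H^4}\tfrac{e^{32\pi^2u^2}-1}{(1+|u|)^p}\,dV_g\to\infty$.
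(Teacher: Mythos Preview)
Your reduction to radial functions (Talenti-type comparison) and your sharpness argument via concentrating test sequences match the paper. But your treatment of the radial problem is genuinely different. The paper does \emph{not} use the ground-state substitution $u=(\sinh\rho)^{-3/2}v$ or split the half-line at a radius. Its single key step is Theorem~\ref{keytool}: for radial $u$ one sets $u_e(y)=v(\si_4|y|^4)$ (the equi-measure transplant to $\R^4$) and proves
\[
\int_{\H^4}(\Delta_g u)^2\,dV_g-\frac{81}{16}\int_{\H^4}u^2\,dV_g\ \ge\ \int_{\R^4}(\Delta u_e)^2\,dy.
\]
Combined with the trivial identity $\int_{\H^4}\Phi(u)\,dV_g=\int_{\R^4}\Phi(u_e)\,dy$, this moves the whole problem to $\R^4$: for \eqref{eq:Adams} the constraint becomes $\int(\Delta u_e)^2+(\tfrac{81}{16}-\lambda)\int u_e^2\le 1$ and one quotes the Euclidean Adams inequality \eqref{eq:AdamsR4} (after a scaling); for \eqref{eq:Adamsexact} one quotes Masmoudi--Sani \eqref{eq:MasmoudiSani} verbatim. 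No splitting, no re-running of the exact-growth machinery.

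Your approach is plausible in outline but hides the main difficulty exactly where you write ``near $\rho=0$ this is exactly the Adams problem on a ball of $\R^4$''. After your substitution the constraint controls $L=-\partial_\rho^2+\tfrac{3}{4\sinh^2\rho}$, whereas the sharp Euclidean Adams constant $32\pi^2$ lives with $L_0=-\partial_\rho^2+\tfrac{3}{4\rho^2}$; since $\sinh\rho>\rho$ the potentials are ordered the wrong way for a bound on $\|Lv\|_2$ to give one on $\|L_0 v\|_2$ for free, and in addition $u$ has no boundary data at $\rho=R$. Making that comparison rigorous at the level of the \emph{sharp} constant is precisely the content of the paper's transfer inequality (established in the volume variable through the rather delicate pointwise estimate of Lemma~\ref{keyestimate}). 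Once that is in hand, there is no reason to redo Masmoudi--Sani in hyperbolic coordinates as you propose for \eqref{eq:Adamsexact}; the paper simply cites the $\R^4$ result. So the paper's route buys a two-line derivation of both \eqref{eq:Adams} and \eqref{eq:Adamsexact} from known Euclidean theorems, at the price of proving Theorem~\ref{keytool}; your route would have to reprove the analogue of that comparison anyway, and then still carry out the level-set bookkeeping by hand.
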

Adams inequality is the limit case of the Sobolev embedding. The sharp Adams inequality was firstly established for bounded domains in the Euclidean space by Adams \cite{Adams}. Adams inequality is the higher order version of the well known Moser--Trudinger inequality involving with the first order derivative (see \cite{M1970,P1965,T1967,Y1961}). The Adams inequality was extended to unbounded domains and whole space by Ruf and Sani \cite{RufSani}, Lam and Lu \cite{LamLu} and Fontana and Morpurgo \cite{FM}. Especially, we have the following inequality in $\R^4$,
\begin{equation}\label{eq:AdamsR4}
\sup_{\int_{\R^4} (\De u)^2 dx + \int_{\R^4} u^2 dx \leq 1} \int_{\R^4} \lt(e^{32\pi^2 u^2} -1\rt) dx < \infty.
\end{equation}
The Adams inequality was established in the hyperbolic space by Karmakar and Sandeep \cite{KS}. Their results in $\H^4$ is the following
\begin{equation}\label{eq:KarSan}
\sup_{\int_{\H^4} (P_2 u) u dV_g \leq 1} \int_{\H^4} \lt(e^{32\pi^2 u^2} -1\rt) dV_g < \infty.
\end{equation}
Notice that the condition $\int_{\H^4} (P_2 u) u dV_g \leq 1$ and ours in \eqref{eq:Adams} can not compare in general. We refer the readers to the paper of Lu and Yang \cite{LuYang} for several Hardy--Adams inequality in $\H^4$ which extends the Hardy--Moser--Trudinger inequality of Wang and Ye \cite{WY} to second order derivative. The inequality \eqref{eq:Adams} can be derived from the results in \cite{LuYang}, however our approach here is different with theirs by using the technique from Fourier analysis in the hyperbolic spaces and Adams' lemma in \cite{Adams}. The inequality \eqref{eq:Adamsexact} is a kind of Adams inequality with exact growth in $\H^4$. However, it is stated only for radial functions. In \cite{Karmakar}, an Adams inequality with exact growth in $\H^4$ was proved for any function $u$ on $\H^4$ under the condition $\int_{\H^4} (P_2 u) u dV_g \leq 1$. Notice that $\int_{\H^4} (P_2 u) u dV_g$ is equivalent with a full Sobolev norm in $\H^4$ which is different with our functional $\int_{\H^4} (\Delta_g u)^2 dV_g -\frac{81}{16} \int_{\H^4} u^2 dV_g$. So it is interesting if we could prove \eqref{eq:Adamsexact} for any funtions on $\H^4$. The Adams inequality with exact growth in $\R^4$ was proved by Masmoudi and Sani \cite{MS} in the following form
\begin{equation}\label{eq:MasmoudiSani}
\sup_{\int_{\R^4} (\De u)^2 dx \leq 1 } \frac{1}{\int_{\R^4} u^2 dx} \int_{\R^4} \frac{e^{32 \pi^2 u^2} -1}{(1 + |u|)^2} dx < \infty.
\end{equation}
The inequality \eqref{eq:MasmoudiSani} implies \eqref{eq:AdamsR4} and plays important role in the proof of \eqref{eq:Adamsexact}. It was extended to any dimension $n\geq 3$ in \cite{LTZ}. We refer the readers to \cite{MS2017} for the Adams inequality with exact growth for higher order derivatives. 

We conclude this introduction by giving some comments on the proof of our main theorems. Our proof is based on the decreasing spherical symmetric rearrangement applied to the setting of hyperbolic spaces. By this technique, we reduce the proof of our theorems to the radial functions in $\H^n$. The next step is the novelty of our approach. Given a radial function $u\in W^{2,2}(\H^n)$, let $v$ be the function on $[0,\infty)$ such that $u(x) = v(V_g(B_g(0,\rho(x))))$, $x\in \H^n$. Using this function $v$, we define a function $u_e$ in $\R^n$ by $u_e(y) = v(\si_n |y|^n)$ with $y\in \R^n$. By the definition of $u_e$, we can easily check that
\begin{equation}\label{eq:dangthucnorm}
\int_{\H^n} \Phi(u) dV_g = \int_0^\infty \Phi(v(s)) ds = \int_{\R^n} \Phi(u_e(y)) dy,
\end{equation}
for any function $\Phi$ on $\R$ such that one of the integrals above is well defined. Our way is to establish a deviation between $\int_{\H^n} (\Delta_g u)^2 dV_g$ and $\int_{\R^n} (\Delta u_e)^2 dy$. The following result is the key tool in the proof of our main theorems.
\begin{theorem}\label{keytool}
Let $n \geq 4$ and let $u\in W^{2,2}(\H^n)$ be a radial function. Suppose $u(x) = v(V_g(B_g(0,\rho(x))))$ for some function $v$ on $[0,\infty)$, and define the function $u_e$ in $\R^n$ by $u_e(y) = v(\si_n |y|^n)$. Then there holds
\begin{equation}\label{eq:keytool}
\int_{\H^n} (\Delta_g u)^2 dV_g -\frac{(n-1)^4}{16} \int_{\H^n} u^2 dV_g \geq \int_{\R^n} (\De u_e)^2 dy.
\end{equation}
\end{theorem}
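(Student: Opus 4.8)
The plan is to push \eqref{eq:keytool} down to a one--dimensional weighted inequality on $(0,\infty)$ by passing to the volume variable, and then to settle that inequality by a Hardy--type (supersolution) argument. By density it suffices to treat radial $u\in C_c^\infty(\H^n)$. Write $u=u(\rho)$ in geodesic polar coordinates, recall $V_g(B_g(0,\rho))=n\si_n\int_0^\rho(\sinh t)^{n-1}\,dt$, and introduce the volume variable $s:=V_g(B_g(0,\rho))$, so that $u=v(s)$ as in the statement while $u_e=v(s)$ with $s=\si_n|y|^n$, $|y|=:r$. Using the divergence form $\De_g u=(\sinh\rho)^{1-n}\,\partial_\rho\big((\sinh\rho)^{n-1}\partial_\rho u\big)$ of the radial Laplace--Beltrami operator and changing variables $\rho\mapsto s$, a short computation gives
\[
\De_g u=\frac{d}{ds}\big(P(s)\,v'(s)\big),\qquad P(s):=(n\si_n)^2\big(\sinh\rho(s)\big)^{2(n-1)},
\]
and the identical computation in $\R^n$ (with $s=\si_n r^n$) yields $\De u_e=\frac{d}{ds}\big(Q(s)\,v'(s)\big)$ where $Q(s):=(n\si_n)^2(s/\si_n)^{2(n-1)/n}$. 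Combined with \eqref{eq:dangthucnorm}, inequality \eqref{eq:keytool} becomes
\[
\int_0^\infty\big[(Pv')'\big]^2\,ds-\frac{(n-1)^4}{16}\int_0^\infty v^2\,ds\ \ge\ \int_0^\infty\big[(Qv')'\big]^2\,ds .
\]

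Since $\sinh t\ge t$ one has $r(s)=(s/\si_n)^{1/n}\ge\rho(s)$, and since $\frac{d}{d\rho}(\sinh\rho)^n=n(\sinh\rho)^{n-1}\cosh\rho\ge n(\sinh\rho)^{n-1}$ one has $r(s)\le\sinh\rho(s)$; hence $0<Q\le P$, and I set $\lambda:=Q/P=(r/\sinh\rho)^{2(n-1)}\in(0,1]$, which is moreover nonincreasing in $s$ by a similar elementary comparison. Substituting $z:=Pv'$ (so $\De_g u=z'$, $\De u_e=(\lambda z)'$ and $v'=z/P$) and integrating by parts once, the boundary terms vanishing since at $s=0$ one has $z=O(\rho^{n})$ and $\lambda\lambda'z^2=O(\rho^{n+2})$ while $u$ has compact support, I obtain the identity
\[
\int_0^\infty\big[(Pv')'\big]^2\,ds-\int_0^\infty\big[(Qv')'\big]^2\,ds=\int_0^\infty(1-\lambda^2)(z')^2\,ds+\int_0^\infty\lambda\,\lambda''\,z^2\,ds .
\]
On the other hand \eqref{eq:Poincare1}, written in the $s$--variable, reads $\int_0^\infty z^2/P\,ds=\int_{\H^n}|\na_g u|_g^2\,dV_g\ge\frac{(n-1)^2}{4}\int_0^\infty v^2\,ds$, so that $\frac{(n-1)^4}{16}\int_0^\infty v^2\,ds\le\frac{(n-1)^2}{4}\int_0^\infty z^2/P\,ds$. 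Therefore it is enough to prove the one--dimensional weighted inequality
\[
\int_0^\infty(1-\lambda^2)(z')^2\,ds\ \ge\ \int_0^\infty\Big(\frac{(n-1)^2}{4\,P(s)}-\lambda(s)\,\lambda''(s)\Big)z^2\,ds .
\]

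I would prove this by producing a positive supersolution, i.e.\ a function $\varphi>0$ on $(0,\infty)$ with $-\big((1-\lambda^2)\varphi'\big)'\ge\big(\frac{(n-1)^2}{4P}-\lambda\lambda''\big)\varphi$; indeed, the substitution $z=\varphi\zeta$ together with one integration by parts then reduces the claim to the obvious inequality $\int_0^\infty(1-\lambda^2)\varphi^2(\zeta')^2\,ds\ge0$. That such a $\varphi$ exists is indicated by the endpoint asymptotics. As $s\to0$ one has $r\sim\rho$, $1-\lambda^2\sim\frac{2(n-1)}{n+2}\rho^2$ and $\frac{(n-1)^2}{4P}-\lambda\lambda''\sim\frac{(n-1)(n^2-7n+14)}{4(n\si_n)^2(n+2)}\,\rho^{\,2-2n}$ (note $n^2-7n+14>0$), so that, back in the $\rho$--variable, the inequality degenerates to the classical one--dimensional Hardy inequality $\int_0^\epsilon\rho^{3-n}(\partial_\rho z)^2\,d\rho\ge\frac{(n-2)^2}{4}\int_0^\epsilon\rho^{1-n}z^2\,d\rho$, whose constant $\frac{(n-2)^2}{4}$ exceeds the required $\frac{n^2-7n+14}{8}$ as soon as $n\ge4$. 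As $s\to\infty$ one has $1-\lambda^2\to1$ and $\frac{(n-1)^2}{4P}-\lambda\lambda''\sim\frac1{4s^2}$, and the inequality degenerates to the critical Hardy inequality $\int_M^\infty(z')^2\,ds\ge\frac14\int_M^\infty s^{-2}z^2\,ds$. One then builds $\varphi$ by matching the corresponding extremal profiles ($\varphi\sim\rho^{(n-2)/2}$ near $0$ and $\varphi\sim s^{1/2}$ near $\infty$) and verifying the differential inequality on the intermediate range.

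The step I expect to be the main obstacle is exactly this last one: exhibiting a single global supersolution $\varphi$ and checking $-\big((1-\lambda^2)\varphi'\big)'\ge\big(\frac{(n-1)^2}{4P}-\lambda\lambda''\big)\varphi$ on all of $(0,\infty)$. This demands sharp pointwise control of $\rho(s)$, $r(s)=(s/\si_n)^{1/n}$ and of the first two $s$--derivatives of $\lambda=(r/\sinh\rho)^{2(n-1)}$ --- in particular the nonnegativity of $\frac{(n-1)^2}{4P}-\lambda\lambda''$ throughout $(0,\infty)$ --- and careful handling of the region near $s=\infty$, where only the subleading terms give the needed margin, as well as of the boundary terms at $s=0$, where $1-\lambda^2\to0$ but $\lambda'\to\infty$. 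Granting Theorem~\ref{keytool}, Theorems~\ref{Rellich} and \ref{Sobolev} follow by applying the sharp Euclidean Rellich inequality \eqref{eq:RellichHn}, respectively the sharp Euclidean second--order Sobolev inequality \eqref{eq:Sobolevorder2}, to $u_e$ and translating the right--hand side back via \eqref{eq:dangthucnorm} and the definition of $u_e$.
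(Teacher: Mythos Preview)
Your setup is correct: the identities $\De_g u=(Pv')'$, $\De u_e=(Qv')'$ and, with $z=Pv'$ and $\lambda=Q/P$,
\[
\int_0^\infty\big[(Pv')'\big]^2\,ds-\int_0^\infty\big[(Qv')'\big]^2\,ds=\int_0^\infty(1-\lambda^2)(z')^2\,ds+\int_0^\infty\lambda\lambda''\,z^2\,ds
\]
are right, as are the monotonicity of $\lambda$ and the endpoint asymptotics you compute. The genuine gap is the step where you invoke \eqref{eq:Poincare1}. That inequality goes the wrong way for your purposes: it bounds $\frac{(n-1)^4}{16}\int v^2$ \emph{above} by $\frac{(n-1)^2}{4}\int z^2/P$, so the one--dimensional inequality you are left to prove,
\[
\int_0^\infty(1-\lambda^2)(z')^2\,ds\ \ge\ \int_0^\infty\Big(\frac{(n-1)^2}{4P}-\lambda\lambda''\Big)z^2\,ds,
\]
is \emph{strictly stronger} than Theorem~\ref{keytool}. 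Nothing in the paper implies it, and your own computation shows it is exactly at the critical Hardy constant $\tfrac14$ as $s\to\infty$. Whether it holds is therefore decided entirely by the subleading corrections to $P$, $\lambda$, $\lambda''$ near infinity, which you have not analyzed; since $1-\lambda^2<1$ throughout while the sign and size of the next correction in $W$ are unknown, even the existence of the global supersolution you need is not established. This is the decisive step, and it is missing, not merely technical.

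The paper avoids this overshoot by never appealing to \eqref{eq:Poincare1}. Instead, Lemma~\ref{expressionbyv'} expands the difference directly in $v',v''$, and the hard work is concentrated in the pointwise polynomial inequality of Lemma~\ref{keyestimate} (a long but elementary computation in $\sinh^2 t$). Together with the known bound $\sinh(F_n(s))^{4(n-1)}-(s/\si_n)^{4(n-1)/n}\ge\big(\tfrac{n-1}{n}\big)^4(s/\si_n)^4$, this yields
\[
\int_{\H^n}(\De_g u)^2-\int_{\R^n}(\De u_e)^2\ \ge\ \frac{(n-1)^4}{n^4}\Big[\int_{\R^n}|x|^4(\De u_e)^2+2(n-2)\int_{\R^n}|x|^2|\na u_e|^2\Big],
\]
and the Euclidean weighted Rellich and Hardy inequalities (constants $\tfrac{n^2(n-4)^2}{16}$ and $\tfrac{n^2}{4}$) then combine \emph{exactly} to $\frac{(n-1)^4}{16}\int u_e^2=\frac{(n-1)^4}{16}\int_{\H^n}u^2$. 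In short: the paper trades your supersolution problem for an explicit pointwise estimate and never needs the stronger Hardy inequality for $z$. If you want to rescue your route, you must either prove that stronger inequality outright (which will force the same kind of subleading analysis), or drop the Poincar\'e reduction and keep $v$ in the estimate.
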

Our main Theorems \ref{Rellich}, \ref{Sobolev} and \eqref{eq:Adams} follow from Theorem \ref{keytool}, the equality \eqref{eq:dangthucnorm} and the similar inequalities in the Euclidean space $\R^n$. This approach was already used by the author in \cite{VHN2018,VHN2018a} to establish several improvements of the Moser--Trudinger inequality and Poincar\'e--Sobolev in the hyperbolic spaces for the first order derivative (e.g., the inequality \eqref{eq:Nguyen} above).

The organization of this paper is as follows. In next Section $2$, we recall some facts about the hyperbolic spaces and the decreasing spherical symmetric rearrangement technique in the hyperbolic spaces which enable us reduce the proof of the main theorems to the radial functions. Section $3$ is devoted to prove Theorem \ref{keytool} which is the key tool in our proof. Theorems \ref{Rellich}, \ref{Sobolev}, \ref{Adams} will be proved in Section $4$. 
 
\section{Preliminaries}
In this section we shall list some properties of the hyperbolic spaces and the symmetrization technique in the hyperbolic spaces. Let $\H^n$ denote the hyperbolic space of dimension $n\geq 2$. The hyperbolic space is a complete and simply connected Riemannian manifold having constant sectional curvature equal to $-1$. There are several models for $\H^n$, for examples, the half space model, the Poincar\'e ball model, the hyperboloid or Lorentz model, etc. All these models are equaivalent. As mentioned in the introduction, we shall use throughout in this paper the Poincar\'e ball model for the hyperbolic space which is very useful for problems involving the rotational symmetry. It is known that the decreasing spherical symmetric rearrangement technique works well in the setting of the hyperbolic space (see, e.g, \cite{Ba}). It is not only the key tool to establish several important inequalities in the hyperbolic spaces such as first order Sobolev inequality, sharp Moser--Trudinger inequality, etc.... but also plays the crucial role in proving the results in this paper. Let us recall some facts on this technique. A measurable function $u:\H^n \to \R$ is called vanishing at the infinity if for any $t >0$ the set $\{|u| > t\}$ has finite $V_g-$measure, i.e.,
\[
V_g(\{|u|> t\}) = \int_{\{|u|> t\}} dV_g < \infty.
\]
For such a function $u$, its distribution function is defined by
\[
\mu_u(t) = V_g( \{|u|> t\}).
\]
Notice that $t \mapsto \mu_u(t)$ is non-increasing and right continuous on $(0,\infty)$. The decreasing rearrangement function $u^*$ of $u$ is defined by
\[
u^*(t) = \sup\{s > 0\, :\, \mu_u(s) > t\}.
\] 
The decreasing, spherical symmetry, rearrangement function $u^\sharp$ of $u$ is defined by
\[
u^\sharp(x) = u^*(V_g(B(0,d(0,x)))),\quad x \in \H^n.
\]
It is well-known that $u$ and $u_g^\sharp$ have the same decreasing rearrangement function (which is $u^*$). As a consequence of this equi-distribution, we have
\begin{equation}\label{eq:dongphanbo}
\int_{\H^n} \Phi(|u|) dV_g = \int_{\H^n} \Phi(u^\sharp) dV_g = \int_0^\infty \Phi(u^*(t)) dt,
\end{equation}
for any function $\Phi: [0,\infty) \to [0,\infty)$ which is increasing, continuous and $\Phi(0) =0$. Since $u^*$ is non-increasing function, the maximal function $u^{**}$ of $u^*$ is defined by
\[
u^{**}(t) = \frac1t \int_0^t u^*(s) ds.
\]
Evidently, $u^*(t) \leq u^{**}(t)$.  Moreover, as a consequence of Hardy inequality, we have
\begin{proposition}\label{hardy}
Let $u \in L^{p}(\H^n)$ with $1 < p < \infty$, then 
\[
\lt(\int_0^\infty u^{**}(t)^p dt\rt)^{\frac1p} \leq \frac p{p-1} \lt(\int_0^\infty u^*(t)^q dt\rt)^{\frac1q} = \frac p{p-1} \|u\|_{L^p(\H^n)}.
\]
\end{proposition}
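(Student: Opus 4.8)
The plan is to derive this from the classical one--dimensional Hardy inequality together with the equimeasurability identity \eqref{eq:dongphanbo}; there is no real obstacle, the statement being a standard fact about rearrangements. (I read the exponent $q$ in the display as a misprint for $p$, since the right--most quantity is asserted to be $\|u\|_{L^p(\H^n)}$.) First I would dispose of the equality: applying \eqref{eq:dongphanbo} with $\Phi(\si)=\si^p$, which is continuous, increasing and vanishes at $0$ when $1<p<\infty$, gives $\int_{\H^n}|u|^p\,dV_g=\int_0^\infty u^*(t)^p\,dt$, hence $\big(\int_0^\infty u^*(t)^p\,dt\big)^{1/p}=\|u\|_{L^p(\H^n)}$.

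For the inequality $\big(\int_0^\infty u^{**}(t)^p\,dt\big)^{1/p}\le\frac{p}{p-1}\big(\int_0^\infty u^*(t)^p\,dt\big)^{1/p}$ I would exhibit $u^{**}$ as an average of dilates of $u^*$ and invoke Minkowski's integral inequality. The substitution $\si=ts$ in the definition of $u^{**}$ gives, for every $t>0$, the identity $u^{**}(t)=\frac1t\int_0^t u^*(\si)\,d\si=\int_0^1 u^*(ts)\,ds$ (legitimate because $u^*\ge0$ is measurable). Minkowski's integral inequality in $L^p((0,\infty),dt)$ then yields
\[
\Big(\int_0^\infty u^{**}(t)^p\,dt\Big)^{1/p}=\Big(\int_0^\infty\Big(\int_0^1 u^*(ts)\,ds\Big)^p dt\Big)^{1/p}\le\int_0^1\Big(\int_0^\infty u^*(ts)^p\,dt\Big)^{1/p}\,ds ,
\]
and for each fixed $s\in(0,1)$ the change of variables $\tau=ts$ gives $\int_0^\infty u^*(ts)^p\,dt=s^{-1}\int_0^\infty u^*(\tau)^p\,d\tau$, so the right--hand side equals $\big(\int_0^1 s^{-1/p}\,ds\big)\big(\int_0^\infty u^*(\tau)^p\,d\tau\big)^{1/p}$. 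Since $\int_0^1 s^{-1/p}\,ds=\frac{1}{1-1/p}=\frac{p}{p-1}$ (finite precisely because $p>1$), this is the claimed bound; combining it with the equality above completes the proof.

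If a self--contained argument is preferred, the very same estimate is the textbook Hardy inequality and can be proved by the usual integration--by--parts trick: for $u^*$ bounded and compactly supported in $(0,\infty)$, set $F(t)=\int_0^t u^*$, integrate $\int_0^\infty (F(t)/t)^p\,dt$ by parts to get $\frac{p}{p-1}\int_0^\infty (F(t)/t)^{p-1}u^*(t)\,dt$ (the boundary terms vanish since $F(t)=O(t)$ as $t\to0$ and $F$ is bounded while $t^{1-p}\to0$ as $t\to\infty$), apply H\"older with exponents $\frac{p}{p-1}$ and $p$, and divide by $\big(\int_0^\infty (F/t)^p\big)^{(p-1)/p}$; the general $u^*\in L^p$ is then reached by truncating $u^*$ to $[1/k,k]$ and letting $k\to\infty$ by monotone convergence. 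The only points that need a word of care are the misprint noted above and the standard justifications (using $u^*\ge0$, resp. the support reduction) of Minkowski's inequality, resp. of the vanishing of the boundary terms; the constant $\frac{p}{p-1}=p'$ is the sharp Hardy constant, although sharpness is not needed in the sequel.
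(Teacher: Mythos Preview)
Your proof is correct; the paper itself does not give a detailed argument but simply records the proposition ``as a consequence of Hardy inequality,'' which is exactly what you prove (via the dilate/Minkowski representation, with the integration--by--parts alternative). Your reading of the exponent $q$ as a misprint for $p$ is also consistent with the paper's use of the result immediately afterwards with $p=2$.
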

For $1 < p < \infty$, let $W^{1,2}(\H^n)$ denote the first order $L^p-$Sobolev space in $\H^n$ which is the completion of $C_0^\infty(\H^n)$ under the norm $\|u\|_{W^{1,p}(\H^n)} = \lt(\int_{\H^n} |\na_g u|_g^p dV_g\rt)^{\frac1p}$. The P\'olya--Szeg\"o principle in $\H^n$ asserts that if $u\in W^{1,p}(\H^n)$ then $u^\sharp \in W^{1,p}(\H^n)$ and $\|u^\sharp\|_{W^{1,p}(\H^n)} \leq \|u\|_{W^{1,p}(\H^n)}$. Furthermore, the following Poincar\'e--Sobolev inequality holds (see \cite{KS} for $p =2$ and \cite{NgoNguyen2017} for general $p\in (1,\infty)$)
\[
\int_{\H^n} |\na_g u|_g^p dV_g \geq \lt(\frac{n-1}p\rt)^p \int_{\H^n} |u|^p dV_g.
\]
Similarly, let $W^{2,p}(\H^n)$ denote the second order $L^p-$Sobolev space in $\H^n$ which is the completion of $C_0^\infty(\H^n)$ under the norm $\|u\|_{W^{2,p}(\H^n)} = \lt(\int_{\H^n} |\Delta_g u|^p dV_g\rt)^{\frac12}$. It was proved by Ngo and the author in \cite{NgoNguyen2017}  (see also \cite{KS} for $p=2$) that
\[
\int_{\H^n} |\De_g u|^p dV_g \geq \lt(\frac{(n-1)^2}{p p'}\rt)^p \int_{\H^n} |u|^p dV_g, \quad p' = \frac{p}{p-1}.
\]
Moreover, we have
\[
\int_{\H^n} |\Delta_g u|^p dV_g \geq \lt(\frac{n-1}{p'}\rt)^p \int_{\H^n} |\na_g u|_g^p dV_g.
\]
Notice that unlike in the space $W^{1,p}(\H^n)$, we do not have an analogue of P\'olya--Szeg\"o principle in $W^{2,p}(\H^n)$. Hence, in order to prove your main results we will establish the following Talenti type comparison principle in the hyperbolic space $\H^n$ instead of the P\'olya--Szeg\"o principle. We refer the readers to \cite{Talenti} for the Talenti comparison principle in the Euclidean space $\R^n$. We next consider the case $p =2$. Suppose that $u\in W^{2,2}(\H^n)$, we set $f = -\Delta_g u \in L^2(\H^n)$. Then we have from Proposition \ref{hardy} that
\[
\int_0^\infty f^{**}(t)^2 dt \leq 4 \int_0^\infty f^*(t)^2 = 4 \|\Delta_g u\|_{L^2(\H^n)}^2.
\]
Obviously, $u \in L^2(\H^n)$ by the Poincar\'e--Sobolev inequality. For each $t >0$ denote $F_n(t)$ the unique solution of the equation
\begin{equation}\label{eq:Fnfunction}
n \si_n \int_0^{F_n(t)} \sinh^{n-1}(s) ds =t.
\end{equation}
The function $t\mapsto F_n(t)$ is strictly increasing and smooth on $(0,\infty)$ and $F_n(0) =0$. It was proved by Ngo and the author in \cite{NgoNguyen2016} that
\begin{equation}\label{eq:NgoNguyen}
u^*(t) \leq v(t):= \int_t^\infty \frac{s f^{**}(s)}{(n\si_n \sinh(F_n(s))^{n-1})^2} ds, \quad s >0.
\end{equation} 
Note that $\sinh(F_n(s))^{n-1} \sim \frac{n-1}{n\si_n} s$ when $s \to \infty$ then the integral in the right hand side of \eqref{eq:NgoNguyen} is well defined for any $t >0$. For $x\in \H^n$, we set $\bar u(x) = v(V_g(B_g(0,\rho(x))))$. It is easy to check that $-\Delta_g \bar u(x) = f^\sharp(x)$ for $x \in \H^n$ and $u^\sharp(x) \leq \bar u(x)$. Consequently, we have
\[
\int_{\H^n} (\Delta_g u)^2 dV_g - \frac{(n-1)^4}{16} \int_{\H^n} u^2 dV_g \geq \int_{\H^n} (\Delta_g \bar u)^2 dV_g - \frac{(n-1)^4}{16} \int_{\H^n} \bar u^2 dV_g,
\]
and by Hardy--Littlewood inequality
\[
\int_{\H^n} \frac{u(x)^2}{V_g(B_g(0,\rho(x)))^a} dV_g \leq \int_{\H^n} \frac{u^\sharp(x)^2}{V_g(B_g(0,\rho(x)))^a} dV_g \leq \int_{\H^n} \frac{\bar u(x)^2}{V_g(B_g(0,\rho(x)))^a} dV_g,
\]
for any $a \geq 0$, and 
\[
\int_{\H^n} \lt(e^{32\pi^2 u^2} -1\rt) dV_g \leq \int_{\H^n}\lt(e^{32 \pi^2 \bar u^2} -1\rt) dV_g.
\]
Hence, it is enough to prove our main Theorems \ref{Rellich}, \ref{Sobolev} and \ref{Adams} for funtion $u$ which is nonnegative, decreasing, radially symmetric function such that $-\De_g u$ is nonnegative, decreasing, radially symmetric function too. Especially, it suffices to prove these theorems for radial functions in $W^{2,2}(\H^n)$. So, in the rest of this paper, we only work with radial functions in $W^{2,2}(\R^n)$.
%%%%%%%%%%%%%%%%%%%%%%%%%%%%%%%%%%%%%%%%%%%%%%%%%%%%%%%%%%%%%%%%%%%%
%%%%%%%%%%%%%%%%%%%%%%%%%%%%%%%%%%%%%%%%%%%%%%%%%%%%%%%%%%%%%%%%%%%%%%%
%%%%%%%%%%%%%%%%%%%%%%%%%%%%%%%%%%%%%%%%%%%%%%%%%%%%%%%%%%%%%%%%%%%%%%%%

\section{Proof of Theorem \ref{keytool}}
In this section, we provide the proof of Theorem \ref{keytool}. We first give some preparations for our proof. Let $u\in W^{2,2}(\H^n)$ be a radial function, and $v:[0,\infty) \to \R$ be a function such that  $u(x) = v(V_g(B_g(0,\rho(x))))$. Let us define the function $u_e$ in $\R^n$ by $u_e(y) = v(\si_n |y|^n)$ with $y \in \R^n$. By a direct computation, we have 
\begin{align}\label{eq:Hnlaplace}
-\De_g u(x) &= -\lt(v''(V_g(B_g(0,\rho(x)))) + \frac{2(n-1)}n \frac{\cosh(\rho(x))}{\sinh(\rho(x))} \frac{v'(V_g(B_g(0,\rho(x))))}{\si_n\sinh(\rho(x))^{n-1}}\rt)\notag\\
&\qquad\qquad \times  (n\si_n)^2 \sinh(\rho(x))^{2(n-1)},
\end{align}
and
\begin{equation}\label{eq:Elaplace}
-\De u_e(y) = -\lt(v''(\si_n |y|^n) + \frac{2(n-1)}n \frac{v'(\si_n |y|^n)}{\si_n |y|^n}\rt) (n \si_n)^2 |y|^{2(n-1)},\quad y\in \R^n.
\end{equation}
Hence, by making the change of variables and using the definition of function $F_n$ (see \eqref{eq:Fnfunction}), we have
\begin{align}\label{eq:L2normDeltaHn}
\int_{\H^n} (\Delta_g u)^2 dV_g &= (n\si_n)^4 \int_0^\infty \lt(v''(s)  + \frac{2(n-1)}{n} \frac{\cosh(F_n(s))}{\sinh(F_n(s))} \frac{v'(s)}{\si_n \sinh(F_n(s))^{n-1}}\rt)^2\notag\\
&\qquad\qquad\qquad\qquad \times  \sinh(F_n(s))^{4(n-1)} ds,
\end{align}
and 
\begin{equation}\label{eq:L2normDeltaE}
\int_{\R^n} (\De u_e)^2 dy = (n\si_n)^4 \int_0^\infty \lt(v''(s) + \frac{2(n-1)}n \frac{v'(s)}{s}\rt)^2 \lt(\frac{s}{\si_n}\rt)^{\frac{4(n-1)}n} ds.
\end{equation} 
In order to prove Theorem \ref{keytool}, we need estimate the quantity
\[
\int_{\H^n} (\Delta_g u)^2 dV_g -\int_{\R^n} (\De u_e)^2 dy.
\]

Let us define
\begin{equation}\label{eq:Phinfunction}
\Phi_n(t) = n \int_0^t \sinh(s)^{n-1} ds, \quad t \geq 0.
\end{equation}
From the definition of $\Phi_n$ and $F_n$, we have $\Phi_n(F_n(t)) = \frac t{\si_n}$. It is easy to see that
\begin{equation}\label{eq:tiemcanPhin}
\Phi_n(t) \sim \sinh(t)^n\quad \text{\rm as}\,\, t \downarrow 0,\quad\text{\rm and}\quad \Phi_n(t) \sim \frac{n}{n-1} \sinh(t)^{n-1}\quad \text{\rm as}\,\, t \to \infty,
\end{equation}
hence it holds
\begin{equation}\label{eq:tiemcanFn}
\sinh(F_n(t))^n \sim
\begin{cases}
\frac{t}{\si_n} &\mbox{as $t \to 0$,}\\
\lt(\frac{n-1} n \frac{t}{\si_n}\rt)^{\frac{n}{n-1}} &\mbox{as $t\to \infty$.}
\end{cases}
\end{equation}
Suppose $-\Delta_g u(x) = f(V_g(B_g(0,\rho(x))))$ for some function $f$ on $[0,\infty)$. Hence, by a simple change of variable, it holds $\int_0^\infty f(s)^2 ds < \infty$. For $t >0$, define
\[
\bar f(t) = \frac1 t\int_0^t f(s) ds.
\]
It is straightforward that
\begin{equation}\label{eq:vformula}
v(t) = \int_t^\infty \frac{s \bar f(s)}{(n \si_n \sinh(F_n(s))^{n-1})^2} ds, \quad t >0.
\end{equation}
We have the following expression of $\int_{\H^n} (\Delta_g u)^2 dV_g -\int_{\R^n} (\De u_e)^2 dy$.
\begin{lemma}\label{expressionbyv'}
It holds
\begin{align}\label{eq:expressionbyv'}
&\frac{\int_{\H^n} (\De_g u)^2 dV_g -\int_{\R^n} (\De u_e)^2 dx}{(n\si_n)^{4} } \notag\\
&= \int_0^\infty\lt(v''(s) + \frac{2(n-1)}n \frac{v'(s)}s\rt)^2 \lt(\sinh(F_n(s))^{4(n-1)} -\lt(\frac{s}{\si_n}\rt)^{\frac{4(n-1)}n} \rt)ds\notag\\
&\quad + \frac{4(n-1)}{n^2} \int_0^\infty v'(s)^2\Bigg(2(n-1) \frac{\cosh(F_n(s))}{s \si_n \sinh(F_n(s))^n}  -\frac{n-1}{2 \si_n^2 \sinh(F_n(s))^{2n-2}}\notag\\
&\qquad\qquad\qquad\qquad\qquad \qquad- \frac{n-2}{2 \si_n^2\sinh(F_n(s))^{2n}} -\frac{3n-2}{2 s^2} \Bigg)\sinh(F_n(s))^{4(n-1)}ds.
\end{align}
\end{lemma}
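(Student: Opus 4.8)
The plan is to start from the two explicit one-dimensional integral representations \eqref{eq:L2normDeltaHn} and \eqref{eq:L2normDeltaE} of $\int_{\H^n}(\De_g u)^2\,dV_g$ and $\int_{\R^n}(\De u_e)^2\,dy$, and to rewrite their difference by completing a square around the common ``Euclidean'' profile $v''(s)+\frac{2(n-1)}{n}\frac{v'(s)}{s}$. Writing $a=v''(s)$, $b_1(s)=\frac{2(n-1)}{n}\frac{\cosh(F_n(s))v'(s)}{\si_n\sinh(F_n(s))^n}$, $b_2(s)=\frac{2(n-1)}{n}\frac{v'(s)}{s}$, $w_1(s)=\sinh(F_n(s))^{4(n-1)}$ and $w_2(s)=(s/\si_n)^{4(n-1)/n}$, the integrand of the difference is $(a+b_1)^2w_1-(a+b_2)^2w_2$, and the elementary identity $(a+b_1)^2w_1-(a+b_2)^2w_2=(a+b_2)^2(w_1-w_2)+(b_1-b_2)\big(2a+b_1+b_2\big)w_1$ immediately produces the first line on the right of \eqref{eq:expressionbyv'}.

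What remains is $\int_0^\infty(b_1-b_2)(2v''+b_1+b_2)\,w_1\,ds$. Since $b_1-b_2=\frac{2(n-1)}{n}v'(s)\big(\frac{\cosh(F_n(s))}{\si_n\sinh(F_n(s))^n}-\frac1s\big)$ and $b_1+b_2$ are both multiples of $v'(s)$, this equals $\int_0^\infty\big(2v''(s)v'(s)\,g_1(s)+v'(s)^2g_2(s)\big)ds$ for explicit functions $g_1,g_2$, with $g_1(s)=\frac{2(n-1)}{n}\big(\frac{\cosh(F_n(s))}{\si_n\sinh(F_n(s))^n}-\frac1s\big)w_1(s)$. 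Using $2v''v'=\big((v')^2\big)'$ I would integrate by parts to obtain $\int_0^\infty(v')^2\big(g_2-g_1'\big)ds$ plus the boundary term $\big[(v')^2g_1\big]_0^\infty$. This boundary term vanishes: as $s\to0$ one combines $w_1\sim(s/\si_n)^{4(n-1)/n}$ from \eqref{eq:tiemcanFn} with the bound on $v'$ coming from \eqref{eq:vformula} (so that the product carries a positive power of $s$, using $n\ge 4$), and as $s\to\infty$ one uses again \eqref{eq:tiemcanFn} together with $f\in L^2(0,\infty)$; alternatively, one first proves the identity when $v$ arises from $u\in C_0^\infty(\H^n)$, where $v$ has compact support bounded away from $+\infty$, and then passes to the limit.

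Finally I would check that $g_2(s)-g_1'(s)$ is exactly $\frac{4(n-1)}{n^2}$ times the bracket in the last two lines of \eqref{eq:expressionbyv'}, multiplied by $w_1(s)$. This is a direct differentiation: from \eqref{eq:Fnfunction} one has $F_n'(s)=\big(n\si_n\sinh(F_n(s))^{n-1}\big)^{-1}$, hence $\tfrac{d}{ds}\sinh(F_n(s))=\cosh(F_n(s))F_n'(s)$ and $\tfrac{d}{ds}\cosh(F_n(s))=\sinh(F_n(s))F_n'(s)$; differentiating $g_1$ and everywhere substituting $\cosh^2(F_n)=1+\sinh^2(F_n)$ to express the result purely in powers of $\sinh(F_n)$ and of $s$, the coefficients of $\si_n^{-2}\sinh(F_n)^{-2n}$, $\si_n^{-2}\sinh(F_n)^{-(2n-2)}$, $s^{-2}$ and $\frac{\cosh(F_n)}{s\,\si_n\sinh(F_n)^n}$ collapse (each using $\frac{2(n-1)}{n}$) to the stated constants $-\frac{n-2}{2}$, $-\frac{n-1}{2}$, $-\frac{3n-2}{2}$ and $2(n-1)$ respectively. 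The hard part will be the bookkeeping in this last simplification and, more delicately, justifying in full rigor that the boundary terms in the integration by parts vanish for every admissible $u$ and not merely for $u\in C_0^\infty(\H^n)$; this is precisely where the asymptotics \eqref{eq:tiemcanFn} of $F_n$ at both ends, combined with \eqref{eq:vformula} and $f\in L^2$, are essential.
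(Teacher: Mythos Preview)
Your proposal is correct and follows essentially the same route as the paper's proof: both decompose the hyperbolic integrand around the Euclidean profile $v''+\frac{2(n-1)}{n}\frac{v'}{s}$ (the paper writes this via $B(s)=\frac{s\cosh(F_n(s))}{\si_n\sinh(F_n(s))^n}$, so your $b_1-b_2=\frac{2(n-1)}{n}\frac{(B-1)v'}{s}$), integrate the resulting $2v''v'$ cross term by parts, verify that the boundary contribution $\big[(v')^2g_1\big]_0^\infty$ vanishes using \eqref{eq:vformula} and the $L^2$ bound on $f$, and then differentiate using $F_n'(s)=(n\si_n\sinh(F_n(s))^{n-1})^{-1}$ to obtain the displayed coefficients. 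The paper makes the boundary argument fully explicit by rewriting $(v')^2g_1$ as a constant times $(B(s)-1)\,s\,\bar f(s)^2$ and showing $s\bar f(s)^2\to 0$ at both ends via H\"older; your sketch of this step is accurate but would benefit from that same reduction.
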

\begin{proof}
This lemma is proved by using integration by parts. Denote $B(s) = \frac{s \cosh(F_n(s))}{\si_n \sinh(F_n(s))^n}$, we then have
\begin{align}\label{eq:1ststep}
&\frac{\int_{\H^n} |\De_g u|^2 dV_g}{(n\si_n)^4}\notag\\
&\qquad = \int_0^\infty\lt(v''(s) + \frac{2(n-1)}n \frac{v'(s)}s + \frac{2(n-1)}n (B(s) -1) \frac{v'(s)}{s}\rt)^2 \sinh(F_n(s))^{4(n-1)} ds\notag\\
&\qquad =\int_0^\infty\lt(v''(s) + \frac{2(n-1)}n \frac{v'(s)}s\rt)^2 \sinh(F_n(s))^{4(n-1)}ds\notag\\
&\qquad\quad + \frac{4(n-1)}n \int_0^\infty \lt(v''(s) + \frac{2(n-1)}n \frac{v'(s)}s\rt)(B(s) -1) \frac{v'(s)}{s} \sinh(F_n(s))^{4(n-1)} ds\notag\\
&\qquad\quad + \frac{4(n-1)^2}{n^2} \int_0^\infty v'(s)^2 \lt(\frac{B(s)-1}s\rt)^2 \sinh(F_n(s))^{4(n-1)} ds\notag\\
&\qquad =\int_0^\infty\lt(v''(s) + \frac{2(n-1)}n \frac{v'(s)}s\rt)^2 \sinh(F_n(s))^{4(n-1)}ds\notag\\
&\qquad\quad + \frac{4(n-1)^2}{n^2} \int_0^\infty v'(s)^2 \frac{B(s)^2-1}{s^2} \sinh(F_n(s))^{4(n-1)} ds\notag\\
&\qquad\quad + \frac{2(n-1)}n \int_0^\infty ((v'(s))^2)' \frac{(B(s) -1) \sinh(F_n(s))^{4(n-1)}}s ds.
\end{align}
We claim that 
\begin{equation}\label{eq:claimv'0}
\lim_{s\to 0}v'(s)^2\frac{(B(s) -1) \sinh(F_n(s))^{4(n-1)}}s = 0,
\end{equation}
and
\begin{equation}\label{eq:claimv'0a}
\lim_{s\to \infty}v'(s)^2\frac{(B(s) -1) \sinh(F_n(s))^{4(n-1)}}s =0.
\end{equation}
Indeed, we have
\[ 
v'(s) = -\frac{s \bar f(s)}{(n\si_n \sinh(F_n(s))^{n-1})^2)},
\]
hence
\[
v'(s)^2\frac{(B(s) -1) \sinh(F_n(s))^{4(n-1)}}s = \frac{1}{(n\si_n)^4} (B(s) -1) s \bar f(s)^2.
\]
It is easy to check by using \eqref{eq:tiemcanFn} that
\[
\lim_{s\to 0} B(s) =1,\quad \text{\rm and}\quad \lim_{s\to \infty} B(s) = \frac{n}{n-1}.
\]
In other hand, by H\"older inequality we have
\[
|\bar f(s)| \leq \frac{1}{s} \int_0^s |f(t)| dt \leq \frac1{\sqrt{s}} \lt(\int_0^s f(t)^2 dt\rt)^{\frac12},
\]
which implies $\lim_{s\to 0} s \bar f(s)^2 = 0$. The claim \eqref{eq:claimv'0} is then followed. We next consider \eqref{eq:claimv'0a}. For any $\ep >0$, we can take a $t_0 >0$ such that $\int_{t_0}^\infty f(t)^2 dt < \ep^2$. For any $s > t_0$, by H\"older inequality we have
\[
|\bar f(s)| \leq \frac1{t_0}{s} |\bar f(t_0)| + \frac{\sqrt{s-t_0}}{s} \lt(\int_{t_0}^s f(t)^2 dt\rt)^{\frac12} \leq \frac{t_0}{s} |\bar f(t_0)| + \frac{\sqrt{s-t_0}}{s} \ep.
\]
Whence, it holds
\[
\limsup_{s\to \infty} \sqrt{s} |\bar f(s)| \leq \ep,
\]
for any $\ep >0$. Then, we get $\lim_{s\to \infty} s \bar f(s)^2 =0$. This limit and the fact $B(s) \to \frac{n}{n-1}$ as $s \to \infty$ implies the claim \eqref{eq:claimv'0a}.

Applying integration by parts and using the claims \eqref{eq:claimv'0} and \eqref{eq:claimv'0a} to the third in the right hand side of \eqref{eq:1ststep}, we get
\begin{align*}
\int_0^\infty (v'(s)^2)' &\frac{(B(s) -1) \sinh(F_n(s))^{4(n-1)}}s ds\\
&\qquad\qquad = -\int_0^\infty v'(s)^2 \lt(\frac{(B(s) -1) \sinh(F_n(s))^{4(n-1)}}s\rt)' ds.
\end{align*}
Furthermore, from the definition \eqref{eq:Fnfunction} of $F_n$, we have $n \si_n \sinh(F_n(s))^{n-1} F_n'(s) = 1$. Hence, it holds
\begin{align*}
&\lt(\frac{(B(s) -1) \sinh(F_n(s))^{4(n-1)}}s\rt)' \\
&\qquad\qquad\qquad\qquad =\lt(\lt(\frac{B(s)-1}{s}\rt)' + \frac{4(n-1)}n \frac{B(s)^2 -B(s)}{s^2}\rt) \sinh(F_n(s))^{4(n-1)},
\end{align*}
and
\[
\lt(\frac{B(s)}s - \frac1{s}\rt)' = \frac1{n \si_n^2 \sinh(F_n(s))^{2n -2}} - \frac{\cosh(F_n(s))^2}{\si_n^2 \sinh(F_n(s))^{2n}} + \frac1{s^2}.
\]
Plugging the preceding equalities into \eqref{eq:1ststep}, we get
\begin{align*}
&\frac{\int_{\H^n} |\De_g u|^2 dV_g}{(n\si_n)^4} \\
&=\int_0^\infty\lt(v''(s) + \frac{2(n-1)}n \frac{v'(s)}s\rt)^2 \sinh(F_n(s))^{4(n-1)}ds\\
&\quad - \frac{4(n-1)^2}{n^2} \int_0^\infty v'(s)^2\Bigg(\frac{n}{2(n-1)}\lt(\frac{B(s)-1}{s}\rt)' +\frac{B(s)^2 -2B(s) +1}{s^2} \Bigg)\\
&\qquad\qquad\qquad\qquad\qquad\qquad \times \sinh(F_n(s))^{4(n-1)} ds\\
&=\int_0^\infty\lt(v''(s) + \frac{2(n-1)}n \frac{v'(s)}s\rt)^2 \sinh(F_n(s))^{4(n-1)}ds\\
&\quad + \frac{4(n-1)}{n^2} \int_0^\infty v'(s)^2\Bigg(2(n-1) \frac{\cosh(F_n(s))}{s \si_n \sinh(F_n(s))^n}  -\frac{n-1}{2 \si_n^2 \sinh(F_n(s))^{2n-2}}\\
&\qquad\qquad\qquad\qquad\qquad \qquad\qquad\qquad - \frac{n-2}{2 \si_n^2\sinh(F_n(s))^{2n}} -\frac{3n-2}{2 s^2} \Bigg)\sinh(F_n(s))^{4(n-1)}ds.
\end{align*}
The desired equality \eqref{eq:expressionbyv'} is now followed from \eqref{eq:L2normDeltaE} and the previous equality.
\end{proof}
To proceed, we shall need the following estimate which is the key in the proof of Theorem \ref{keytool}.
\begin{lemma}\label{keyestimate}
Let $n \geq 4$. There holds
\begin{align}\label{eq:keyestimate}
&\Bigg(2(n-1) \frac{\cosh(F_n(s))}{s \si_n \sinh(F_n(s))^n}  -\frac{n-1}{2 \si_n^2 \sinh(F_n(s))^{2n-2}} \notag\\
&\qquad\qquad - \frac{n-2}{2 \si_n^2\sinh(F_n(s))^{2n}} -\frac{3n-2}{2 s^2} \Bigg)\sinh(F_n(s))^{4(n-1)} > \frac{(n-1)^3(n-2)}{2 n^4} \frac{s^2}{\si_n^4}, 
\end{align}
for any $s >0$.
\end{lemma}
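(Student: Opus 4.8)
The plan is to reduce \eqref{eq:keyestimate} to a one--variable inequality and then verify it. First I would substitute $t=F_n(s)$, that is $s=\sigma_n\Phi_n(t)$ (using $\Phi_n(F_n(s))=s/\sigma_n$); after this substitution every factor $\sigma_n$ cancels. Writing $c=\cosh t$, $a=\sinh t$, $\phi=\Phi_n(t)$ and
\[
\lambda=\lambda(t):=\frac{\sinh^{n}t}{\cosh t\,\Phi_n(t)}\qquad\bigl(\text{thus }\lambda=1/B(s)\text{ in the notation of Lemma }\ref{expressionbyv'}\bigr),
\]
a direct computation --- express $a^{n}/\phi=c\lambda$ and $a^{2n}/\phi^{2}=c^{2}\lambda^{2}$, divide by $a^{2n-4}$, and use $a^{2}=c^{2}-1$ --- turns \eqref{eq:keyestimate} into
\[
c^{2}\bigl(4(n-1)\lambda-(n-1)-(3n-2)\lambda^{2}\bigr)+1>\frac{(n-1)^{3}(n-2)(c^{2}-1)^{2}}{n^{4}c^{2}\lambda^{2}} .
\]

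Second, I would collect the properties of $\lambda(t)$ that are needed. Since $\frac{d}{dt}\bigl(\sinh^{n}t/\cosh t\bigr)=\sinh^{n-1}t\,(n-\tanh^{2}t)$ and $\Phi_n'(t)=n\sinh^{n-1}t$, the bound $n-1<n-\tanh^{2}t<n$ integrated from $0$ gives $\frac{\sinh^{n}t}{\cosh t}<\Phi_n(t)<\frac{n}{n-1}\frac{\sinh^{n}t}{\cosh t}$, hence $\lambda(t)\in\bigl(\frac{n-1}{n},1\bigr)$. Differentiating $\sinh^{n-2}t\cosh t$ gives the recursion $\frac{n-1}{n}\Phi_n(t)=\sinh^{n-2}t\cosh t-\Phi_{n-2}(t)$; substituting the two--sided bound for $\Phi_{n-2}$ (and once more for $\Phi_{n-4}$) into it provides the sharpened estimates for $\lambda(t)$ that are the crux of the proof, in particular the asymptotics $\lambda(t)=\frac{n-1}{n}\bigl(1+\frac{1}{(n-3)\cosh^{2}t}+o(\cosh^{-2}t)\bigr)$ as $t\to\infty$ and $\lambda(t)=1-\frac{t^{2}}{n+2}+O(t^{4})$ as $t\to0$. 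When $n=4$ one even has the closed form $\Phi_4(t)=\frac{4}{3}(\cosh t-1)^{2}(\cosh t+2)$, so $\lambda$ is then an explicit rational function of $\cosh t$ and the reduced inequality becomes a polynomial inequality in $\cosh t$ that is checked directly.

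Third, multiplying the reduced inequality by $n^{4}c^{2}\lambda^{2}>0$ and writing $u=\cosh^{2}t\ge1$, it takes the form
\[
A(\lambda)\,u^{2}+B(\lambda)\,u+C>0,\qquad A(\lambda)=n^{4}\lambda^{2}\bigl(4(n-1)\lambda-(n-1)-(3n-2)\lambda^{2}\bigr)-(n-1)^{3}(n-2),
\]
with $C=-(n-1)^{3}(n-2)<0$ and $B(\lambda)=n^{4}\lambda^{2}+2(n-1)^{3}(n-2)>0$. The main obstacle is the asymptotic sharpness of \eqref{eq:keyestimate}, which surfaces here as the facts $A\bigl(\frac{n-1}{n}\bigr)=0$ and $A(1)<0$: since $\lambda(t)>\frac{n-1}{n}$ forces $A(\lambda(t))<0$, the crude range $\lambda\in(\frac{n-1}{n},1)$ can never by itself give $A u^{2}+Bu+C>0$, and one must exploit the coupling between $\lambda$ and $u$ coming from the recursion. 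Concretely, the value at $u=1$ equals $n^{4}\lambda^{2}\bigl(P(\lambda)+1\bigr)>0$, where $P(\lambda)+1=-(3n-2)(\lambda-1)\bigl(\lambda-\frac{n-2}{3n-2}\bigr)$ is positive on $\bigl(\frac{n-1}{n},1\bigr)$; and as $u\to\infty$ the estimate $\lambda(t)-\frac{n-1}{n}\sim\frac{n-1}{n(n-3)u}$ together with $A'\bigl(\frac{n-1}{n}\bigr)=-2n(n-1)^{2}(n-2)^{2}$ makes $A(\lambda(t))u^{2}$ and $B(\lambda(t))u$ both of order $u$, the coefficient of $u$ in their sum being positive exactly when $n^{3}-5n^{2}+6n-4>0$, which holds for all $n\ge4$. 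Feeding the sharpened bounds on $\lambda(t)$ into $A(\lambda)u^{2}+B(\lambda)u+C$ and splitting into the regimes ``$\cosh t$ near $1$'', ``$\cosh t$ large'', and the bounded intermediate range then completes the proof.
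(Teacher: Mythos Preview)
Your reduction to the variables $u=\cosh^{2}t$ and $\lambda=\sinh^{n}t/(\cosh t\,\Phi_{n}(t))$ is legitimate, and the endpoint checks are correct: the value at $u=1$ is positive because $\lambda\in(\frac{n-1}{n},1)$, and your large--$u$ analysis correctly isolates the condition $n^{3}-5n^{2}+6n-4>0$ (this same polynomial appears in the paper's coefficient $a_{4}(n)$). But the proposal is not a proof. The clause ``splitting into the regimes \ldots\ then completes the proof'' is exactly where the work lies, and you have supplied none of it. You yourself observed that $A(\lambda(t))<0$ for every $t>0$ and that the inequality is asymptotically sharp; consequently no compactness argument on a bounded $u$--range can close the gap without a \emph{quantitative, uniform} two--sided bound on $\lambda(t)$ as a function of $u$. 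You allude to such bounds (``substituting the two--sided bound for $\Phi_{n-2}$ and once more for $\Phi_{n-4}$'') but never write them down, never verify that they are sharp enough to make $A(\lambda)u^{2}+B(\lambda)u+C>0$ for all $u\ge1$, and never explain how the three regimes are to be patched. Since the margin at both asymptotic ends is of lower order, this is the entire difficulty of the lemma, not a routine step.

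The paper follows a different route that avoids the $(u,\lambda)$ parametrisation. It rewrites \eqref{eq:keyestimate} as $G_{n}(t)<0$, differentiates three times to obtain auxiliary functions $H_{n}$, $J_{n}$ with $G_{n}(0)=H_{n}(0)=J_{n}(0)=0$, and at the level of $J_{n}'$ reduces the sign question to a comparison that is settled by the single explicit lower bound
\[
\Phi_{n}(t)\ \ge\ \frac{n\,\sinh^{n}t\cosh t\,\bigl((n-3)\sinh^{2}t+n+2\bigr)}{(n-1)(n-3)\sinh^{4}t+2n(n-1)\sinh^{2}t+n(n+2)},
\]
itself proved by one differentiation. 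This bound is exactly the ``sharpened estimate for $\lambda(t)$'' your sketch needs but does not supply; with it, the remaining inequality becomes the positivity of a degree--$5$ polynomial in $\sinh^{2}t$ whose coefficients are checked directly for $n\ge4$. If you wish to salvage your approach, the honest completion is to derive an explicit uniform bound of this type and carry the algebra through for all $u\ge1$, not merely at the endpoints.
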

\begin{proof}
We start the proof by remark that the inequality \eqref{eq:keyestimate} is equivalent to
\begin{align}\label{eq:arbitraryn}
\frac{(n-1)^3(n-2)}{2 n^4}& \Phi_n(t)^4+\lt(\frac{n-1}2 \sinh(t)^{2(n-1)} + \frac{n-2}2\sinh(t)^{2n-4}\rt)\Phi_n(t)^2 \notag\\
&\quad -2(n-1) \sinh(t)^{3n-4} \cosh(t) \Phi_n(t) + \frac{3n-2}2 \sinh(t)^{4(n-1)} < 0,
\end{align}
for any $t>0$ where $\Phi_n$ is defined by \eqref{eq:Phinfunction}. Let us define
\begin{align*}
G_n(t) &= \frac{(n-1)^3(n-2)}{2 n^4} \Phi_n(t)^4+\lt(\frac{n-1}2 \sinh(t)^{2(n-1)} + \frac{n-2}2\sinh(t)^{2n-4}\rt)\Phi_n(t)^2 \notag\\
&\quad -2(n-1) \sinh(t)^{3n-4} \cosh(t) \Phi_n(t) + \frac{3n-2}2 \sinh(t)^{4(n-1)}.
\end{align*}
We need to check $G_n(t) \leq 0$ for $t \geq 0$. Notice that $G_n(0) = 0$. Differentiating the function $G_n$ gives
\begin{align*}
G_n'(t)&= \frac{2(n-1)^3(n-2)}{n^3} \Phi_n(t)^3 \sinh(t)^{n-1}\\
&\quad + \lt((n-1)^2 \sinh(t)^{2n-3} \cosh(t)+ (n-2)^2\sinh(t)^{2n-5} \cosh(t)\rt)\Phi_n(t)^2\\
&\quad + n\lt((n-1) \sinh(t)^{2(n-1)} + (n-2)\sinh(t)^{2n-4}\rt)\Phi_n(t) \sinh(t)^{n-1}\\
&\quad -2(n-1) ((3n -3) \sinh(t)^{3n-3} + (3n -4)\sinh(t)^{3n-5})\Phi_n(t)\\
&\quad -2n(n-1) \sinh(t)^{4n-5} \cosh(t) + 2(n-1)(3n -2) \sinh(t)^{4n-5} \cosh(t)\\
&= \frac{2(n-1)^3(n-2)}{n^3} \Phi_n(t)^3 \sinh(t)^{n-1}\\
&\quad + \lt((n-1)^2 \sinh(t)^{2n-3} \cosh(t)+ (n-2)^2\sinh(t)^{2n-5} \cosh(t)\rt)\Phi_n(t)^2\\
&\quad -((n-1)(5n -6) \sinh(t)^{3n -3} + (5n^2 -12n +8) \sinh(t)^{3n-5}) \Phi_n(t)\\
&\quad + 4(n-1)^2 \sinh(t)^{4n-5} \cosh(t).
\end{align*}
Denote
\begin{align*}
H_n(t)&= \frac{2(n-1)^3(n-2)}{n^3} \Phi_n(t)^3\\
&\quad + \lt((n-1)^2 \sinh(t)^{n-2} \cosh(t)+ (n-2)^2\sinh(t)^{n-4} \cosh(t)\rt)\Phi_n(t)^2\\
&\quad -((n-1)(5n -6) \sinh(t)^{2n -2} + (5n^2 -12n +8) \sinh(t)^{2n-4}) \Phi_n(t)\\
&\quad + 4(n-1)^2 \sinh(t)^{3n-4} \cosh(t),
\end{align*}
we then have $G_n'(t) = H_n(t) \sinh(t)^{n-1}$ and $H_n(0) = 0$. Differentiating $H_n$ once, we get
\begin{align*}
&H_n'(t) \\
& = \frac{6(n-1)^3(n-2)}{n^2} \Phi_n(t)^2 \sinh(t)^{n-1}\\
& + \lt((n-1)^3 + \frac{(n-2)(2n^2 -7n +7)}{\sinh(t)^2} + \frac{(n-2)^2(n-4)}{\sinh(t)^4}\rt) \Phi_n(t)^2 \sinh(t)^{n-1}\\
& + 2n \lt((n-1)^2 \sinh(t)^{n-2} \cosh(t)+ (n-2)^2\sinh(t)^{n-4} \cosh(t)\rt)\Phi_n(t) \sinh(t)^{n-1}\\
& -(2(n-1)^2(5n -6) \sinh(t)^{2n -3} + (2n-4)(5n^2 -12n +8) \sinh(t)^{2n-5}) \cosh(t) \Phi_n(t)\\
& - n((n-1)(5n -6) \sinh(t)^{2n -2} + (5n^2 -12n +8) \sinh(t)^{2n-4}) \sinh(t)^{n-1}\\
& + 4(n-1)^2 (3(n-1)\sinh(t)^{3n-3} + (3n -4) \sinh(t)^{3n -5})\\
&= \lt(\frac{(n-1)^3 (n^2 +6n -12)}{n^2}+ \frac{(n-2)(2n^2 -7n +7)}{\sinh(t)^2} + \frac{(n-2)^2(n-4)}{\sinh(t)^4}\rt)  \Phi_n(t)^2 \sinh(t)^{n-1}\\
& -(4(n-1)^2(2n-3) \sinh(t)^{2} + 4(n-2)(2n^2 -5n +4)) \cosh(t) \Phi_n(t) \sinh(t)^{2n-5}\\
& + ((n-1)(7n^2 -18 n+12) \sinh(t)^{2} + (7n^3 -28n^2 + 36n -16)) \sinh(t)^{3n-5}.
\end{align*}
Denote
\begin{align*}
J_n(t)&= \Phi_n(t)^2 + \frac{((n-1)(7n^2 -18n + 12)\sinh(t)^2 + 7n^3 -28 n^2 + 36 n -16)\sinh(t)^{2n}}{\frac{(n-1)^3(n^2 + 6n -12)}{n^2} \sinh(t)^4 + (n-2)(2n^2 -7n + 7)\sinh(t)^2 + (n-2)^2(n-4)}\\
& -\frac{(4(n-1)^2(2n-3) \sinh(t)^{2} + 4(n-2)(2n^2-5n +4))\sinh(t)^{n}\cosh(t)}{\frac{(n-1)^3(n^2 + 6n -12)}{n^2} \sinh(t)^4 + (n-2)(2n^2 -7n + 7)\sinh(t)^2 + (n-2)^2(n-4)} \Phi_n(t).
\end{align*}
We have 
\begin{align*}
H_n'(t) &= \Bigg(\frac{(n-1)^3 (n^2 +6n -12)}{n^2}+ \frac{(n-2)(2n^2 -7n +7)}{\sinh(t)^2} + \frac{(n-2)^2(n-4)}{\sinh(t)^4}\Bigg)\\
&\qquad\qquad \times J_n(t)\, \sinh(t)^{n-1},
\end{align*}
and $J_n(0) =0$. Denote $s =\sinh(t)^2$, differentiating $J_n$ we get
\begin{align*}
&\frac{J_n'(t)}{\sinh(t)^{n-1}}\notag\\
& = 2n\Phi_n(t) -n\frac{(4(n-1)^2(2n-3) s + 4(n-2)(2n^2-5n +4))\sinh(t)^{n}\cosh(t)}{\frac{(n-1)^3(n^2 + 6n -12)}{n^2} s^2 + (n-2)(2n^2 -7n + 7)s + (n-2)^2(n-4)} \notag\\
& - \frac{4(n-1)^2(2n -3)(n+3)s^2 + 4(4n^4 -10 n^3-n^2 + 19n -14)s + 4(n-2)n(2n^2 -5n +4)}{\frac{(n-1)^3(n^2 + 6n -12)}{n^2} s^2 + (n-2)(2n^2 -7n + 7)s + (n-2)^2(n-4)}\Phi_n(t)\notag\\
& +\frac{(4(n-1)^2(2n-3)s + 4(n-2)(2n^2 -5n +4))(4 \frac{(n-1)^3(n^2 + 6n -12)}{n^2} s + 2(n-2)(2n^2 -7n +7))}{(\frac{(n-1)^3(n^2 + 6n -12)}{n^2} s^2 + (n-2)(2n^2 -7n + 7)s + (n-2)^2(n-4))^2}\notag\\
&\qquad \times s(s+1) \Phi_n(t)\notag\\
& + \frac{(2(n^2 -1)(7n^2 -18n + 12)s + 2n(7n^3 -28n^2 + 36n -16))\sinh(t)^n\cosh(t)}{\frac{(n-1)^3(n^2 + 6n -12)}{n^2} s^2 + (n-2)(2n^2 -7n + 7)s + (n-2)^2(n-4)}\notag\\
& -\frac{((n -1)(7n^2 -18n + 12)s + 7n^3 -28n^2 + 36n -16)(4 \frac{(n-1)^3(n^2 + 6n -12)}{n^2} s + 2(n-2)(2n^2 -7n +7))}{(\frac{(n-1)^3(n^2 + 6n -12)}{n^2} s^2 + (n-2)(2n^2 -7n + 7)s + (n-2)^2(n-4))^2}\notag\\
&\qquad \times s \sinh(t)^n \cosh(t).
\end{align*}
By direct computations, we simplify $\frac{J_n'(t)}{\sinh(t)^{n-1}}$ as
\begin{align}\label{eq:daohamJn}
&\frac{J_n'(t)}{\sinh(t)^{n-1}}\notag\\
&=-\frac{\frac{2(n-1)^2(3n^3+ n^2 -12)}{n}s ^2 + (12 n^4 -18n^3 -46 n^2+ 104 n -56)s + 2n^2(n-2)(3n-4)}{\frac{(n-1)^3(n^2 + 6n -12)}{n^2} s^2 + (n-2)(2n^2 -7n + 7)s + (n-2)^2(n-4)}\Phi_n(t)\notag\\
&+\frac{(4(n-1)^2(2n-3)s + 4(n-2)(2n^2 -5n +4))(4 \frac{(n-1)^3(n^2 + 6n -12)}{n^2} s + 2(n-2)(2n^2 -7n +7))}{(\frac{(n-1)^3(n^2 + 6n -12)}{n^2} s^2 + (n-2)(2n^2 -7n + 7)s + (n-2)^2(n-4))^2}\notag\\
&\qquad \times s(s+1) \Phi_n(t)\notag\\
& + \frac{2(n-1)(3n^3 -n^2 -12n + 12)s + 2n^2(n-2)(3n-4)}{\frac{(n-1)^3(n^2 + 6n -12)}{n^2} s^2 + (n-2)(2n^2 -7n + 7)s + (n-2)^2(n-4)}\sinh(t)^n \cosh(t)\notag\\
&-\frac{((n -1)(7n^2 -18n + 12)s + 7n^3 -28n^2 + 36n -16)(4 \frac{(n-1)^3(n^2 + 6n -12)}{n^2} s + 2(n-2)(2n^2 -7n +7))}{(\frac{(n-1)^3(n^2 + 6n -12)}{n^2} s^2 + (n-2)(2n^2 -7n + 7)s + (n-2)^2(n-4))^2}\notag\\
&\qquad \times s \sinh(t)^n \cosh(t)\notag\\
&=-\frac{A(s) \Phi_n(t) - B(s)\sinh(t)^n \cosh(t)}{(\frac{(n-1)^3(n^2 + 6n -12)}{n^2} s^2 + (n-2)(2n^2 -7n + 7)s + (n-2)^2(n-4))^2},
\end{align}
where
\begin{align*}
A(s)& =\frac{6(n-1)^6(n-2)^2(n^2 + 6n-12)}{n^3}s^4 \\
&\quad + \frac{(n-1)^2}{n^2}(24 n^7 -116 n^6-88 n^5 + 1732 n^4 - 4920 n^3 +6744 n^2 -4800 n +1440) s^3\\
&\quad + \frac{n-2}{n^2} (36 n^8 -252 n^7 + 506 n^6 + 392 n^5-3386 n^4 + 6504 n^3 -6480 n^2 + 3456n -768) s^2 \\
&\quad + (n-2)^2(24 n^5 -156 n^4 + 316 n^3 -224 n^2 + 32 n) s\\
&\quad + 2n^2(n-2)^3(3n-4)(n-4),
\end{align*}
and
\begin{align*}
B(s) &= \frac{6(n-1)^5(n^2 + 6n -12)(n-2)^2}{n^2} s^3\\
&\quad + \frac{2(n-1)(n-2)}{n^2} (9 n^7 -43 n^6 -24 n^5 + 502 n^4 -1242 n^3 +1424 n^2 - 816 n +192) s^2\\
&\quad + 2(n-2)^2(9 n^5 -59 n^4 + 131 n^3 -123 n^2 + 46n -8) s\\
&\quad + 2 n^2(n-2)^3(n-4)(3n -4).
\end{align*}

We will need the following lower bound for $\Phi_n(t)$
\begin{equation}\label{eq:lowerboundPhin}
\int_0^t \sinh(s)^{n-1} ds \geq \frac{\sinh(t)^n \cosh(t)((n-3)\sinh(t)^2 + n +2)}{(n-1)(n-3) \sinh(t)^4 + 2n(n-1)\sinh(t)^2 + n(n+2)},
\end{equation}
for any $n \geq 4$. Indeed, let us define the function $K_n$ by
\[
K_n(t) = \int_0^t \sinh(s)^{n-1} ds - \frac{\sinh(t)^n \cosh(t)((n-3)\sinh(t)^2 + n +2)}{(n-1)(n-3) \sinh(t)^4 + 2n(n-1)\sinh(t)^2 + n(n+2)}.
\]
We have $K_n(0) =0$. Differentiating $K_n$ gives
\begin{align*}
K_n'(t) &= \sinh(t)^{n-1} -\frac{(n\sinh(t)^{n-1}\cosh(t)^2 + \sinh(t)^{n+1})((n-3)\sinh(t)^2 + (n+2))}{(n-1)(n-3) \sinh(t)^4 + 2n(n-1)\sinh(t)^2 + n(n+2)}\\
&\quad -\frac{2(n-3)\sinh(t)^{n+1} \cosh(t)^2}{(n-1)(n-3) \sinh(t)^4 + 2n(n-1)\sinh(t)^2 + n(n+2)}\\
&\quad + \frac{4(n-1)\sinh(t)^{n+1} \cosh(t)^2 ((n-3)\sinh(t)^2 + n+2)((n-3)\sinh(t)^2 + n)}{((n-1)(n-3) \sinh(t)^4 + 2n(n-1)\sinh(t)^2 + n(n+2))^2}.
\end{align*}
Denote again $s = \sinh^2(t)$, we have
\begin{align*}
\frac{K_n'(t)}{\sinh(t)^{n-1}}& = 1 -\frac{((n+1)s + n)((n-3)s + n+2) + 2(n-3)s(s+1)}{(n-1)(n-3)s^2 + 2n(n-1)s + n(n+2)}\\
&\quad + 4(n-1) \frac{s(s+1)((n-3)s + n+2)((n-3)s +n)}{((n-1)(n-3)s^2 + 2n(n-1)s + n(n+2))^2}\\
&= \frac{24 s^2}{((n-1)(n-3)s^2 + 2n(n-1)s + n(n+2))^2}\\
&\geq 0.
\end{align*}
Hence $K_n(t) \geq K_n(0) = 0$ for $t\geq 0$ as desired.

We next claim that
\begin{equation}\label{eq:claimkey}
n((n-3)s + n+2) A(s) > ((n-1)(n-3)s^2 + 2n(n-1)s + n(n+2))B(s),\, s >0.
\end{equation}
The claim \eqref{eq:claimkey}, \eqref{eq:lowerboundPhin} and \eqref{eq:daohamJn} implies $J_n'(t) < 0$ for any $t >0$. Therefore, $J_n$ is strictly decreasing in $[0,\infty)$ and $J_n(t) < J_n(0) = 0$ for any $t > 0$.  The relation between $J_n$ and $H_n'$ yields $H_n'(t) < 0$ for any $t >0$ which gives $H_n(t) < H_n(0) =0$ for any $t >0$. Whence $G_n'(t) = \sinh^{n-1}(t) \, H_n(t) < 0$ for any $t > 0$ which forces $G_n(t) < G_n(0) =0$ for any $t >0$ as desired.

It remains to verify the claim \eqref{eq:claimkey}. The proof is the long and complicated computations. Define
\[
P(s) = n((n-3)s + n+2) A(s)- ((n-1)(n-3)s^2 + 2n(n-1)s + n(n+2))B(s).
\]
$P$ is a polynomial of $s$ of order $5$, hence we can write 
\[
P(s) = a_0(n) + a_1(n) s + a_2(n) s^2 + a_3(n) s^3 + a_4(n) s^4 + a_5(n) s^5,
\]
with coefficients $a_i(n), i= 0, \ldots,5$ depending on $n$. Our way is to show $a_i(n) \geq 0$ for $n\geq 4$. Indeed, $a_0(n) = a_5(n) = 0$. The direct computations give
\begin{align*}
a_4(n)& = \frac{2 (n - 1)^2 (n - 2)}{n^2}(2 n - 3) (3 n - 5) (n^2 + 6 n - 12) (n^3 - 5 n^2 + 6 n - 4)\\
&\quad >0,
\end{align*}
\begin{align*}
a_3(n)& =\frac{2(n-2)}n (18 n^8 - 135 n^7 + 485 n^6 - 1087 n^5 + 1677 n^4 - 1926 n^3 + 1656 n^2 - 888 n + 192)\\
&\quad > 0,
\end{align*}
\begin{align*}
a_2(n) &= \frac{2(n-2)^2}n (18 n^7 - 123 n^6 + 370 n^5 -551 n^4 + 258 n^3 + 360 n^2 -528 n + 192) > 0,
\end{align*}
and
\begin{align*}
a_1(n) & =2 n(n - 2)^3 (n-4)(6 n^3 -n^2 -5 n + 2) \geq 0.
\end{align*}
Hence the claim \eqref{eq:claimkey} is proved, and the proof of this lemma is completely finished.
\end{proof}

We are now ready to prove Theorem \ref{keytool}.
\begin{proof}[Proof of Theorem \ref{keytool}]
It follows from Lemmas \ref{expressionbyv'} and \ref{keyestimate} that
\begin{align}\label{eq:proofTh2.2}
\int_{\H^n} &(\De_g u)^2 dV_g -\int_{\R^n} (\De u_e)^2 dx\notag\\
&\geq (n\si_n)^{4}\int_0^\infty\lt(v''(s) + \frac{2(n-1)}n \frac{v'(s)}s\rt)^2 \lt(\sinh(F_n(s))^{4(n-1)} -\lt(\frac{s}{\si_n}\rt)^{\frac{4(n-1)}n} \rt)ds\notag\\
&\quad + \frac{2(n-1)^4(n-2)}{ n^4}  \int_0^\infty v'(s)^2 (n\si_n)^2 \lt(\frac{s}{\si_n}\rt)^2ds.
\end{align}
It was proved in \cite{VHN2018,VHN2018a} that 
\[
\sinh(F_n(s))^{4(n-1)} -\lt(\frac{s}{\si_n}\rt)^{\frac{4(n-1)}n}  \geq \lt(\frac{n-1}n\rt)^4 \lt(\frac{s}{\si_n}\rt)^4.
\]
Plugging the preceding estimate into \eqref{eq:proofTh2.2} gives
\begin{align}\label{eq:tofinish}
\int_{\H^n} (\De_g u)^2 dV_g & -\int_{\R^n} (\De u_e)^2 dx\notag\\
&\geq \frac{(n-1)^4}{n^4}\int_0^\infty\lt(v''(s) + \frac{2(n-1)}n \frac{v'(s)}s\rt)^2 (n\si_n)^4 \lt(\frac s{\si_n}\rt)^4ds\notag\\
&\quad + \frac{2(n-1)^4(n-2)}{ n^4}  \int_0^\infty v'(s)^2 (n\si_n)^2 \lt(\frac{s}{\si_n}\rt)^2ds\notag\\
&=\frac{(n-1)^4}{n^4}\lt( \int_{\R^n} (\De u_e)^2 |x|^4 dx + 2(n-2) \int_{\R^n} |\na u_e|^2 |x|^2 dx\rt). 
\end{align}
Recall the weighted Rellich and Hardy inequality in $\R^n$ (see, e. g, \cite[Theorems $12$ and $13$]{DH})
\[
\int_{\R^n} (\De u_e)^2 |x|^4 dx \geq \frac{n^2(n-4)^2}{16} \int_{\R^n} |u_e|^2 dx,
\]
and
\[
\int_{\R^n} |\na u_e|^2 |x|^2 dx \geq \frac{n^2}{4} \int_{\R^n} |u_e|^2 dx.
\]
Applying these inequalities to the right hand side of \eqref{eq:tofinish}, we get
\begin{align*}
\int_{\H^n} (\De_g u)^2 dV_g -\int_{\R^n} (\De u_e)^2 dx &\geq \frac{(n-1)^4}{n^4} \lt(\frac{n^2(n-4)^2}{16} + 2(n-2)\frac{n^2}{4}\rt)  \int_{\R^n} |u_e|^2 dx\\
&= \frac{(n-1)^4}{16} \int_{\R^n} |u_e|^2 dx\\
&= \frac{(n-1)^4}{16} \int_{\H^n} u^2 dV_g,
\end{align*}
as desired, here we use \eqref{eq:dangthucnorm} for the last equality.
\end{proof}

%%%%%%%%%%%%%%%%%%%%%%%%%%%%%%%%%%%%%%%%%%%%%%%%%%%%%%%%%%%%%%%%%%%%%
%%%%%%%%%%%%%%%%%%%%%%%%%%%%%%%%%%%%%%%%%%%%%%%%%%%%%%%%%%%%%%%%%%%%%
%%%%%%%%%%%%%%%%%%%%%%%%%%%%%%%%%%%%%%%%%%%%%%%%%%%%%%%%%%%%%%%%%%%%%
\section{Proofs of the main theorems}
In this section, we provide the proof of Theorems \ref{Rellich}, \ref{Sobolev} and \ref{Adams}. We begin by proving Theorem \ref{Rellich}.

\subsection{Proof of Theorem \ref{Rellich}}
This subsection is devote to prove Theorem \ref{Rellich}. The proof follows from Theorem \ref{keytool} and the sharp Rellich inequality in $\R^n$ as follows.
\begin{proof}[Proof of Theorem \ref{Rellich}]
As discussed in Section $2$, it is enough to prove Theorem \ref{Rellich} for radial function $u \in W^{2,2}(\H^n)$. Suppose $u \in W^{2,2}(\H^n)$ to be a radial function, and $v$ is a function on $[0,\infty)$ such that $u(x) = v(V_g(B_g(0,\rho(x))))$. Define the function $u_e$ in $\R^n$ by $u_e(y)= v(\si_n |y|^n)$. By Theorem \ref{keytool}, we have
\[
\int_{\H^n} (\De_g u)^2 dV_g - \frac{(n-1)^4}{16} \int_{\H^n} u^2 dV_g \geq \int_{\R^n} (\De u_e)^2 dy.
\]
Since $n\geq 5$, by Rellich inequality (see \cite{DH}), we have
\begin{align*}
\int_{\R^n} (\De u_e)^2 dy &\geq \frac{n^2(n-4)^2}{16} \int_{\R^n} \frac{u_e(y)^2}{|y|^4} dy\\
& = \frac{n^2(n-4)^2}{16} \int_0^\infty \frac{v(s)^2}{(\frac s{\si_n})^{\frac4n}} ds\\
&= \frac{n^2(n-4)^2}{16} \int_{\H^n} \frac{u(x)^2}{(\frac{V_g(B_g(0,\rho(x)))}{\si_n})^{\frac4n}} dV_g.
\end{align*}
Combining these inequalities finishes the proof of Theorem \ref{Rellich}.
\end{proof}
\subsection{Proof of Theorem \ref{Sobolev}}
The proof of Theorem \ref{Sobolev} is similar with the one of Theorem \ref{Rellich}. Instead of using Rellich inequality, we use the sharp Sobolev inequality \eqref{eq:Sobolevorder2} as follows.
\begin{proof}[Proof of Theorem \ref{Sobolev}]
As discussed in Section $2$, it is enough to prove Theorem \ref{Sobolev} for radial function $u \in W^{2,2}(\H^n)$. Suppose $u \in W^{2,2}(\H^n)$ to be a radial function, and $v$ is a function on $[0,\infty)$ such that $u(x) = v(V_g(B_g(0,\rho(x))))$. Define the function $u_e$ in $\R^n$ by $u_e(y)= v(\si_n |y|^n)$. By Theorem \ref{keytool}, we have
\[
\int_{\H^n} (\De_g u)^2 dV_g - \frac{(n-1)^4}{16} \int_{\H^n} u^2 dV_g \geq \int_{\R^n} (\De u_e)^2 dy.
\]
Since $n\geq 5$, by the sharp Sobolev inequality \eqref{eq:Sobolevorder2}, we have
\begin{align*}
\int_{\R^n} (\De u_e)^2 dy &\geq S_2(n,2) \lt(\int_{\R^n} |u_e(y)|^{\frac{2n}{n-4}} dy\rt)^{\frac{n-4}n}\\
& = S_2(n,2) \lt(\int_0^\infty |v(s)|^{\frac{2n}{n-4}} ds\rt)^{\frac{n-4}n}\\
&= S_2(n,2) \lt(\int_{\H^n} |u|^{\frac{2n}{n-4}} dV_g\rt)^{\frac{n-4}n}.
\end{align*}
Combining these inequalities finishes the proof of Theorem \ref{Sobolev}.
\end{proof}

As mentioned in introduction, we obtain the following Poincar\'e--Sobolev inequality for the GJMS operator $P_2$ from Theorem \ref{Sobolev} with an explicit constant.
\begin{corollary}\label{cor}
Let $n \geq 5$. It holds
\begin{equation}\label{eq:PS2th}
\int_{\H^n} (P_2 u) u\, dV_g - \frac{9}{16} \int_{\H^n} u^2 dV_g \geq \frac{5S_2(n,2)}{(n-1)^2} \lt(\int_{\H^n} |u|^{\frac{2n}{n-4}} dV_g\rt)^{\frac{n-4}n}, \quad u\in C_0^\infty(\H^n).
\end{equation}
\end{corollary}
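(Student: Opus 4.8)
The plan is to deduce Corollary \ref{cor} from Theorem \ref{Sobolev} by comparing the quadratic form of $P_2$ with the functional $\int_{\H^n}(\Delta_g u)^2\,dV_g - \frac{(n-1)^4}{16}\int_{\H^n}u^2\,dV_g$. Recall that $P_2 = P_1(P_1+2)$ with $P_1 = -\Delta_g - \frac{n(n-2)}{4}$. First I would expand $P_2$ in terms of $-\Delta_g$: writing $L = -\Delta_g$, we have $P_1 = L - \frac{n(n-2)}{4}$, so
\[
P_2 = \Bigl(L - \tfrac{n(n-2)}{4}\Bigr)\Bigl(L - \tfrac{n(n-2)}{4} + 2\Bigr) = L^2 - \Bigl(\tfrac{n(n-2)}{2} - 2\Bigr)L + \tfrac{n(n-2)}{4}\Bigl(\tfrac{n(n-2)}{4} - 2\Bigr).
\]
Hence, testing against $u\in C_0^\infty(\H^n)$ and using that $L$ is self-adjoint and nonnegative,
\[
\int_{\H^n}(P_2u)u\,dV_g = \int_{\H^n}(\Delta_g u)^2\,dV_g - \Bigl(\tfrac{n(n-2)}{2}-2\Bigr)\int_{\H^n}|\nabla_g u|_g^2\,dV_g + \tfrac{n(n-2)}{4}\Bigl(\tfrac{n(n-2)}{4}-2\Bigr)\int_{\H^n}u^2\,dV_g,
\]
where I used $\int (Lu)u\,dV_g = \int |\nabla_g u|_g^2\,dV_g$.

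Next I would bound the middle (gradient) term from below using the two spectral inequalities recalled in the preliminaries: the first-order Poincaré inequality \eqref{eq:Poincare1}, $\int|\nabla_g u|_g^2\,dV_g \geq \frac{(n-1)^2}{4}\int u^2\,dV_g$, and the inequality $\int(\Delta_g u)^2\,dV_g \geq \frac{n-1}{p'}{}^{\,p}\int|\nabla_g u|_g^2\,dV_g$ which for $p=2$ reads $\int(\Delta_g u)^2\,dV_g \geq (n-1)^2\int|\nabla_g u|_g^2\,dV_g$, equivalently $\int|\nabla_g u|_g^2\,dV_g \leq \frac{1}{(n-1)^2}\int(\Delta_g u)^2\,dV_g$. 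Since the coefficient $\frac{n(n-2)}{2}-2 = \frac{n^2-2n-4}{2}$ is positive for $n\geq 5$, one writes it as a convex-type combination $\int|\nabla_g u|_g^2\,dV_g = \theta\int|\nabla_g u|_g^2\,dV_g + (1-\theta)\int|\nabla_g u|_g^2\,dV_g$ for a suitable $\theta\in[0,1]$, bounding the first copy below by the Poincaré term and the second copy above by $\frac{1}{(n-1)^2}\int(\Delta_g u)^2\,dV_g$; the parameter $\theta$ should be chosen so that, after collecting coefficients, the resulting lower bound for $\int(P_2u)u\,dV_g$ becomes exactly $\bigl(1-c\bigr)\int(\Delta_gu)^2\,dV_g + \bigl(\tfrac{(n-1)^4}{16}(1-c) + \tfrac{9}{16}\bigr)\int u^2\,dV_g$ for some constant $c\in(0,1)$ — indeed, since $\frac{9}{16}=\prod_{i=1}^2\frac{(2i-1)^2}{4}$ is the Poincaré constant for $P_2$ in \eqref{eq:PoincarePk}, the arithmetic is forced, and matching identities should give $c = \frac{n^2-6n-4}{2(n-1)^2}$ or a similar explicit rational function, with $1-c = \frac{5}{(n-1)^2}\cdot\frac{(n-1)^2}{5}$... more precisely the computation should yield precisely the factor $\frac{5}{(n-1)^2}$ in front of $\int(\Delta_gu)^2\,dV_g - \frac{(n-1)^4}{16}\int u^2\,dV_g$.

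Concretely, I expect the cleanest route is: show the operator inequality $P_2 - \frac{9}{16} \geq \frac{5}{(n-1)^2}\bigl(\Delta_g^2 - \frac{(n-1)^4}{16}\bigr)$ as quadratic forms on $C_0^\infty(\H^n)$, which after substituting the expansion of $P_2$ above reduces to showing
\[
\Bigl(1-\tfrac{5}{(n-1)^2}\Bigr)\!\int(\Delta_gu)^2\,dV_g - \Bigl(\tfrac{n^2-2n-4}{2}\Bigr)\!\int|\nabla_gu|_g^2\,dV_g + \Bigl(\tfrac{n(n-2)}{4}\bigl(\tfrac{n(n-2)}{4}-2\bigr) - \tfrac{9}{16} + \tfrac{5(n-1)^2}{16}\Bigr)\!\int u^2\,dV_g \geq 0;
\]
then one uses $\int(\Delta_gu)^2 \geq (n-1)^2\int|\nabla_gu|_g^2$ and $\int|\nabla_gu|_g^2 \geq \frac{(n-1)^2}{4}\int u^2$ (splitting the $(\Delta_gu)^2$ term between these two as above) and verifies the remaining scalar inequality in $n$, which is an elementary polynomial check valid for $n\geq 5$. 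Once this operator inequality is established, combining it with Theorem \ref{Sobolev} — which gives $\int(\Delta_gu)^2\,dV_g - \frac{(n-1)^4}{16}\int u^2\,dV_g \geq S_2(n,2)\bigl(\int|u|^{\frac{2n}{n-4}}\,dV_g\bigr)^{\frac{n-4}{n}}$ — immediately yields \eqref{eq:PS2th}. The main obstacle is purely bookkeeping: pinning down the exact split parameter so that the coefficient $\frac{5}{(n-1)^2}$ emerges and simultaneously the leftover $u^2$-coefficient is nonnegative for all $n\geq 5$; I expect this to work because $\frac{9}{16}$ is exactly the sharp Poincaré constant for $P_2$, so the two Poincaré-type inequalities are "tight in the right place" and the slack is genuinely $\frac{5}{(n-1)^2}$ of the second-order gap.
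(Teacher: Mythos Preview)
Your overall strategy matches the paper's: establish the quadratic-form inequality
\[
\int_{\H^n}(P_2u)u\,dV_g - \frac{9}{16}\int_{\H^n}u^2\,dV_g \;\ge\; \frac{5}{(n-1)^2}\lt(\int_{\H^n}(\De_gu)^2\,dV_g - \frac{(n-1)^4}{16}\int_{\H^n}u^2\,dV_g\rt)
\]
and then apply Theorem~\ref{Sobolev}. Your expansion of $P_2$ is correct, and the remainder to be shown nonnegative is, with $m:=n(n-2)$ so that $(n-1)^2=m+1$,
\[
\frac{m-4}{m+1}\int_{\H^n}(\De_gu)^2\,dV_g - \frac{m-4}{2}\int_{\H^n}|\na_gu|_g^2\,dV_g + \frac{(m-4)(m+1)}{16}\int_{\H^n} u^2\,dV_g.
\]

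The gap is in the tool you propose for this last step. For $p=2$ one has $p'=2$, so the inequality from the preliminaries reads $\int(\De_gu)^2\,dV_g\ge\frac{(n-1)^2}{4}\int|\na_gu|_g^2\,dV_g$, \emph{not} $(n-1)^2\int|\na_gu|_g^2\,dV_g$; you have lost a factor of~$4$. With the correct constant $\frac{(n-1)^2}{4}=\frac{m+1}{4}$ the splitting scheme cannot close: to absorb the negative gradient term via $\alpha\int(\De_gu)^2\ge\frac{\alpha(m+1)}{4}\int|\na_gu|_g^2$ you would need $\frac{\alpha(m+1)}{4}\ge\frac{m-4}{2}$, i.e.\ $\alpha\ge\frac{2(m-4)}{m+1}$, which already exceeds the total available coefficient $\frac{m-4}{m+1}$. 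And the first-order Poincar\'e inequality $\int|\na_gu|_g^2\ge\frac{m+1}{4}\int u^2$ goes the wrong way for a term carrying a negative sign. So neither a full substitution nor any convex split yields a nonnegative lower bound.

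The paper's fix is simpler and uses no spectral inequality at all: the remainder above is an exact perfect square,
\[
\frac{n(n-2)-4}{(n-1)^2}\int_{\H^n}\lt(\De_gu+\frac{(n-1)^2}{4}\,u\rt)^{2}dV_g\;\ge\;0.
\]
Your intuition that the arithmetic is ``forced'' by the sharp constant $\tfrac{9}{16}$ for $P_2$ is correct; its manifestation is precisely that the discriminant of this quadratic form in $(\De_gu,u)$ vanishes, so completing the square is the right move rather than trying to bound the cross term.
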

The inequality \eqref{eq:PS2th} is exact the inequality \eqref{eq:LuYangineq} for $k =2$ which improves the Poincar\'e inequality \eqref{eq:PoincarePk} for $k=2$. Our proof of \eqref{eq:PS2th} in this paper is completely different with the one of Lu and Yang \cite{LuYanga}. Moreover, we obtain the explicit constant in the right hand side of \eqref{eq:PS2th}.
\begin{proof}[Proof of Corollary \ref{cor}]
For $u \in C_0^\infty(\H^n)$, we have 
\begin{align*}
\int_{\H^n} (P_2 u) u\, dV_g - \frac{9}{16} \int_{\H^n} u^2 dV_g&= \int_{\H^n} (\Delta_g u)^2 dV_g + \lt(\frac{n(n-2)}2 -2\rt) \int_{\H^n} (\Delta_g u) u\, dV_g \\
&\quad + \frac{n(n-2) (n(n-2) -8) -9}{16} \int_{\H^n} u^2 dV_g.
\end{align*}
Hence, it holds
\begin{align*}
\int_{\H^n} (P_2 u) u\, dV_g &- \frac{9}{16} \int_{\H^n} u^2 dV_g - \frac{5}{(n-1)^2} \lt(\int_{\H^n} (\De_g u)^2 dV_g - \frac{(n-1)^4}{16} \int_{\H^n} u^2 dV_g\rt)\\
&= \frac{n(n-2) -4}{(n-1)^2} \int_{\H^n} (\Delta_g u)^2 dV_g + \frac{n(n-2)-4}2 \int_{\H^n} (\Delta_g u) u\, dV_g \\
&\quad + \frac{(n(n-2) -4)(n-1)^2}{16} \int_{\H^n} u^2 dV_g\\
&= \frac{n(n-2) -4}{(n-1)^2} \int_{\H^n} \lt(\Delta_g u + \frac{(n-1)^2}{4} u\rt)^2 dV_g\\
&\geq 0.
\end{align*}
Consequence, we get
\[
\int_{\H^n} (P_2 u) u\, dV_g - \frac{9}{16} \int_{\H^n} u^2 dV_g \geq \frac{5}{(n-1)^2} \lt(\int_{\H^n} (\De_g u)^2 dV_g - \frac{(n-1)^4}{16} \int_{\H^n} u^2 dV_g\rt).
\]
This corollary follows from the preceding inequality and Theorem \ref{Sobolev}.
\end{proof}

It is well known that the sharp Sobolev inequality and Rellich inequality are special cases of family of Rellich--Sobolev inequalities in $\R^n$ (see, e. g., \cite{Musina,MusinaS}) as follows
\[
\int_{\R^n} (\De w)^2 dy \geq S_{2,s}(n,2) \lt(\int_{\R^n} \frac{|w(y)|^{\frac{2(n-s)}{n-4}}}{|y|^s} dy\rt)^{\frac{n-4}{n-s}}, \quad w \in C_0^\infty(\R^n), \, 0 \leq s \leq 4,
\]
where $S_{2,s}(n,2)$ denotes the sharp constant in the inequality above. Obviously, we have $S_{2,0}(n,2) = S_2(n,2)$ and $S_{2,4}(n,2) = \frac{n^2(n-4)^2}{16}$. Using the Rellich--Sobolev inequality above and the argument in the proof of Theorems \ref{Rellich} and \ref{Sobolev}, we easily obtain the following inequality which combines the sharp Poincar\'e inequality and the sharp Rellich--Sobolev inequality in $\H^n$,
\[
\int_{\H^n} (\De_g u)^2 dV_g - \frac{(n-1)^4}{16} \int_{\H^n} u^2 dV_g \geq S_{2,s}(n,2) \lt(\int_{\H^n} \frac{|u(x)|^{\frac{2(n-s)}{n-4)}}}{(\frac{V_g(B(0,\rho(x)))}{\si_n})^{\frac{s}n}} dV_g\rt)^{\frac{n-4}{n-s}},
\]
for any $u\in C_0^\infty(\H^n)$, $0 \leq s \leq 4$.
\subsection{Proof of Theorem \ref{Adams}}
In this section, we provide the proof of Theorem \ref{Adams}. Similar to the proof of Theorems \ref{Rellich} and \ref{Sobolev}, the proof of Theorem \ref{Adams} is done by using Theorem \ref{keytool} and the sharp Adams inequality \eqref{eq:AdamsR4} in $\R^4$. By scaling, it is easy to derive from \eqref{eq:AdamsR4} the following inequality
\begin{equation}\label{eq:Adamstau}
\sup_{\int_{\R^4} (\De u)^2 + \tau \int_{\R^4} u^2 dx \leq 1} \int_{\R^4} \lt(e^{32\pi^2 u^2} -1\rt) dx \leq \frac1{\tau}\, \sup_{\int_{\R^4} (\De u)^2 + \int_{\R^4} u^2 dx \leq 1} \int_{\R^4} \lt(e^{32\pi^2 u^2} -1\rt) dx.
\end{equation}
\begin{proof}[Proof of Theorem \ref{Adams}]
Again, as discussed in Section $2$, we only need to prove \eqref{eq:Adams} for radial function $u \in W^{2,2}(\H^4)$. Let $\lam < \frac{81}{16}$ and $u \in W^{2,2}(\H^4)$ be a radial function such that
\[
\int_{\H^n} (\Delta_g u)^2 dV_g - \lam \int_{\H^n} u^2 dV_g \leq 1.
\]
Suppose $u(x) = v(V_g(B_g(0,\rho(x))))$ for a function $v$ on $[0,\infty)$, let us define the function $u_e$ in $\R^4$ by $u_e(y) = v(\si_4 |y|^4)$ for $y\in \R^4$. By Theorem \ref{keytool}, we have
\[
\int_{\H^4} (\De_g u)^2 dV_g - \frac{81}{16} \int_{\H^4} u^2 dV_g \geq \int_{\R^4} (\De u_e)^2 dy,
\]
which then implies
\[
\int_{\R^4} (\De u_e)^2 dy + \lt(\frac{81}{16} -\lam\rt) \int_{\R^4} u_e^2 dy \leq \int_{\H^n} (\Delta_g u)^2 dV_g - \lam \int_{\H^n} u^2 dV_g \leq 1.
\]
Since $\lam < \frac{81}{16}$, then $\tau: =\frac{81}{16} -\lam >0$. By the sharp Adams inequality \eqref{eq:Adamstau} for this $\tau$, we have
\[
\int_{\R^n} \lt(e^{32 \pi^2 u_e^2} -1\rt) dy \leq \frac{16}{81 -16 \lam}\,\sup_{\int_{\R^4} (\De u)^2 + \int_{\R^4} u^2 dx \leq 1} \int_{\R^4} \lt(e^{32\pi^2 u^2} -1\rt) dx=: C_\lam.
\]
However, we have
\[
\int_{\R^n} \lt(e^{32 \pi^2 u_e^2} -1\rt) dy = \int_0^\infty \lt(e^{32 \pi^2 v(s)^2} -1\rt)^2 = \int_{\H^n} \lt(e^{32\pi u^2} -1\rt) dV_g.
\]
The proof of \eqref{eq:Adams} is then completed.

The proof of \eqref{eq:Adamsexact} is completely similar with the one of \eqref{eq:Adams} by using Theorem \ref{keytool} and the Adams inequality with exact growth in $\R^4$ due to Masmoudi and Sani. So we omit it. It remains to check the optimality of the power $2$ in the denominator in the integral in \eqref{eq:Adamsexact}. To do this, we construct a sequence of test functions as follows. For each $m \in \N$, consider
\[
u_m(x) = 
\begin{cases}
\sqrt{\frac1{32 \pi^2} \ln m} + \sqrt{\frac{1}{8 \pi^2 \ln m}}(1 - \sqrt{m} |x|^2)&\mbox{if $|x|\leq m^{-\frac14}$,}\\
-\sqrt{\frac1{2\pi^2 \ln m}} \ln |x| &\mbox{if $m^{-\frac14} \leq |x| \leq 1$,}\\
\xi_m(x) &\mbox{if $|x| \geq 1$,}
\end{cases}
\]
where $\xi_m \in C_0^\infty(\R^n)$ is a radial function such that $\xi_m(x) =0$ for $|x|\geq 2$, 
\[
\xi_m\Big{|}_{\{|x| =1\}} =0,\qquad \frac{\pa \xi_m}{\pa r}\Big{|}_{\{|x| =1\}} = -\sqrt{\frac{1}{2\pi^2 \ln m}},
\]
and $\xi_m, |\na \xi_m|$ and $\De \xi_m$ are all $O((\ln m)^{-\frac12})$. The choice of the sequence $\{\xi_m\}_m$ is inspired by the sequence of Masmoudi and Sani \cite{MS2014}. Following the idea of Karmakar \cite{Karmakar}, we set $\bar u_m(x) = u_m(3 x)$ then the support of $\bar u_m$ is contained in $\{|x|\leq \frac23\}$. We can easily check that
\[
\int_{\H^4} \bar u^2 dV_g = O((\ln m)^{-1}),\quad \text{and}\quad \int_{\H^4} (\De_g \bar u)^2 dV_g = 1 + O((\ln m)^{-1}).
\]
Let $w_m = c_m \bar u_m$ where $c_m$ is chosen such that $\int_{\H^4} (\De_g w_m)^2 dV_g  - \frac{81}{16} \int_{\H^4} w_m^2 dV_g =1$. Then $c_m = 1 + O((\ln m)^{-1})$. Notice that $w_m$ is radial function. Moreover, we have
\begin{align*}
\int_{\H^4} \frac{e^{32\pi^2 w_m^2} -1}{(1 + w_m^2)^p} dV_g &\geq  \int_{\{|x|\leq 3^{-1} m^{-\frac14}\}}\frac{e^{32\pi^2 w_m} -1}{(1 + w_m)^p} dV_g\\
&\geq C (\ln m)^{-\frac p2} (1 + O((\ln m)^{-1})  \int_{\{|x|\leq 3^{-1} m^{-\frac14}\}} (e^{c_m^2 \ln m} -1) dx\\
&\geq C (\ln m)^{-\frac p2}(1 + O((\ln m)^{-1}) m^{-1}(e^{c_m^2 \ln m} -1)\\
&\geq C (\ln m)^{-\frac p2}(1 + O((\ln m)^{-1})e^{(c_m^2-1)\ln m},
\end{align*}
here we denote by $C$ for any constant which is independent of $m$. Notice that $c_m^2 = 1 + O((\ln m)^{-1})$ and $\int_{\H^4} w_m^2 dV_g = O((\ln m)^{-1})$. Hence it holds
\[
\frac{1}{\int_{\H^4} w_m^2 dV_g} \int_{\H^4} \frac{e^{32\pi^2 w_m^2} -1}{(1 + w_m^2)^p} dV_g \geq C (\ln m)^{1-\frac p2} (1 + O((\ln m)^{-1})) e^{O(1)},
\]
and
\[
\lim_{m\to \infty}\frac{1}{\int_{\H^4} w_m^2 dV_g} \int_{\H^4} \frac{e^{32\pi^2 w_m^2} -1}{(1 + w_m^2)^p} dV_g \geq C (\ln m)^{1-\frac p2} (1 + O((\ln m)^{-1})) e^{O(1)} = \infty,
\]
if $p < 2$. This proves the sharpness of the power $2$ in the denominator in the integral in \eqref{eq:Adamsexact}.
\end{proof}

\section*{Acknowledgments}
The author would like to thank Quoc Hung Phan for useful discussions in the proof of Theorem \ref{keytool}.

%\appendix
%\section{Appendix: Proof of }


\begin{thebibliography}{9999}

\bibitem{Adams}
D. R. Adams, \emph{A sharp inequality of J. Moser for higher order derivatives\text}, Ann. of Math. (2), {\bf 128} (1988), no. 2, 385--398.

%\bibitem{Alvino}
%A. Alvino, P. L. Lions, and G. Trombetti, \emph{On optimization problems with prescribed rearrangements\text}, Nonlinear Anal., {\bf 13} (1989) 185--220.

\bibitem{Aubin}
T. Aubin, \emph{Probl\`emes isop\'erim\'etriques et espaces de Sobolev\text}, J. Differential Geom., {\bf 11} (1976) 573--598.

\bibitem{Ba}
A. Baernstein II, \emph{A Unifed Approach to Symmetrisation, Partial Differential Equations of Elliptic Type\text}, Cortona, 1992, in: Sympos. Math., vol. XXXV, Cambridge Univ. Press, Cambridge, 1994, pp. 47--91.

\bibitem{BFL}
R. D. Benguria, R. L. Frank, and M. Loss, \emph{The sharp constant in the Hardy--Sobolev--Maz'ya inequality in the three dimensional upper half-space\text}, Math. Res. Lett., {\bf 15} (2008) 613--622.

\bibitem{BDGG}
E. Berchio, L. D'Ambrosio, D. Ganguly, and G. Grillo, \emph{Improved $L^p-$Poincar\'e inequalities on the hyperbolic space\text}, Nonlinear Anal., {\bf 157} (2017) 146--166.

\bibitem{BG}
E. Berchio, and D. Ganguly, \emph{Improved higher order Poincar\'e inequalities on the hyperbolic space via Hardy--type remainder terms\text}, Commun. Pure Appl. Anal., {\bf 15} (2016), no. 5, 1871--1892.

\bibitem{BGG}
E. Berchio, D. Ganguly, and G. Grillo, \emph{Sharp Poincar\'e--Hardy and Poincar\'e--Rellich inequalities on the hyperbolic space\text}, J. Funct. Anal., {\bf 272} (2017) 1661--1703.

\bibitem{FM}
L. Fontana, and C. Morpurgo, \emph{Sharp exponential integrability for critical Riesz potentials and fractional Laplacians on $\R^n$\text}, Nonlinear Anal., {\bf 167} (2018) 85--122.

\bibitem{BGGP}
E. Berchio, D. Ganguly, G. Grillo, and Y. Pinchover, \emph{An optimal improvement for the Hardy inequality on the hyperbolic space and related manifolds\text}, preprint, arXiv:1711.08423.

%\bibitem{Bennett}
%C. Bennett, and R. Sharpley, \emph{Interpolation of Operator\text}, Pure and Applied Mathematics, Vol. 129 (Academic press, Inc., Boston, MA, 1988).

%\bibitem{Bogelein}
% V. B\"ogelein, F. Duzaar, and C. Scheven, \emph{A sharp quantitative isoperimetric inequality in hyperbolic nspace\text}, Calc. Var. Partial Differ. Equa., {\bf 54} (2015) 3967--4017. 

%\bibitem{CT2009}
%D. Cassani, and C. Tarsi, \emph{A Moser--type inequality in Lorentz--Sobolev spaces for unbounded domains in $\R^N$\text}, Asymptot. Anal., {\bf 64} (2009), no. 1--2, 29--51.

\bibitem{DH}
E. B. Davies, and A. M. Hinz, \emph{Explicit constants for Rellich inequalities in $L_p(\Om)$\text}, Math. Z., {\bf 227} (1998), no. 3, 511--523.

\bibitem{GJMS}
C. R. Graham, R. Jenne, L. J. Mason, and G. A. J. Sparling, \emph{Conformally invariant powers of the Laplacian. I. Existence\text}, J. London Math. Soc. (2), {\bf 46} (1992), no. 3, 557--565.

\bibitem{Juhl}
A. Juhl, \emph{Explicit formulas for GJMS--operators and Q--curvatures\text}, Geom. Funct. Anal., {\bf 23} (2013), no. 4, 1278--1370.

\bibitem{KS}
D. Karmakar, and K. Sandeep, \emph{Adams inequality on the hyperbolic space\text}, J. Funct. Anal., {\bf 270} (2016), no. 5, 1792--1817.

\bibitem{Karmakar}
D. Karmakar, \emph{Adams inequality with exact growth in the hyperbolic space $\H^4$ and Lions lemma\text}, Commun. Contemp. Math., (in press).

\bibitem{KO}
I. Kombe, and M. \"Ozaydin, \emph{Improved Hardy and Rellich inequalities on Riemannian manifolds\text}, Trans. Amer. Math. Soc., {\bf 361} (2009), no. 12, 6191--6203.

\bibitem{LamLu}
N. Lam, and G. Lu, \emph{Sharp Adams type inequalities in Sobolev spaces $W^{m,\frac nm}(\R^n)$ for arbitrary integer $m$\text}, J. Differential Equations, {\bf 253} (2012), no. 4, 1143--1171.

\bibitem{Lieb}
E. H. Lieb, \emph{Sharp constants in the Hardy--Littlewood--Sobolev and related inequalities\text}, Ann. of Math. (2), {\bf 118} (1983), no. 2, 349--374.

\bibitem{Liu}
G. Liu, \emph{Sharp higher--order Sobolev inequalities in the hyperbolic space $\H^n$\text}, Calc. Var. Partial Differential Equations, {\bf 47} (2013), no. 3-4, 567--588.

\bibitem{LTZ}
G. Lu, H. Tang, and M. Zhu, \emph{Best constants for Adams' inequalities with the exact growth condition in $\R^n$\text}, Adv. Nonlinear Stud., {\bf 15} (2015), no. 4, 763--788.

\bibitem{LuYang}
G. Lu, and Q. Yang, \emph{Sharp Hardy-Adams inequalities for bi--Laplacian on hyperbolic space of dimension four\text}, Adv. Math., {\bf 319} (2017) 567--598.

\bibitem{LuYanga}
G. Lu, and Q. Yang, \emph{Hardy--Sobolev--Maz'ya inequalities for higher order derivatives on half spaces\text}, preprint, arXiv:1703.08171.

\bibitem{MS}
G. Mancini, and K. Sandeep, \emph{On a semilinear elliptic equation in $\H^n$\text}, Ann. Sc. Norm. Super. Pisa Cl. Sci. (5), {\bf 7} (2008), no. 4, 635--671.

\bibitem{MS2014}
N. Masmoudi, and F. Sani, \emph{Adams' inequality with the exact growth condition in $\R^4$\text}, Comm. Pure Appl. Math., {\bf 67} (2014), no. 8, 1307--1335.

\bibitem{MS2017}
N. Masmoudi, and F. Sani, \emph{Higher order Adams' inequality with the exact growth condition\text}, Comm. Contemp. Math., (2017) 1750072, 33 pages.

\bibitem{Maz'ya}
V. G. Maz'ya, \emph{Sobolev spaces\text}, Springer Verlag, Berlin, New York, 1985.

\bibitem{M1970}
J. Moser, \emph{A sharp form of an inequality by N. Trudinger\text}, Indiana Univ. Math. J., {\bf 20} (1970/71) 1077-1092.

\bibitem{MusinaS}
R. Musina, and K. Sreenadh, \emph{Radially symmetric solutions to the H\'enon--Lane--Emden system on the critical hyperbola\text}, Commun. Contemp. Math., {\bf 16} (2014), no. 3, 1350030, 16 pp.

\bibitem{Musina}
R. Musina, \emph{Optimal Rellich--Sobolev constants and their extremals\text}, Differential Integral Equations, {\bf 27} (2014), no. 5--6, 579--600. 

\bibitem{NgoNguyen2016}
Q. A. Ngo, and V. H. Nguyen, \emph{Sharp Adams--Moser--Trudinger type inequalities in the hyperbolic spaces\text}, preprint, arXiv:1606.07094.

\bibitem{NgoNguyen2017}
Q. A. Ngo, and V. H. Nguyen, \emph{Sharp constant for Poincar\'e--type inequalities in the hyperbolic space\text}, to appear in Acta Mathematica Vietnamica.

\bibitem{VHN2018}
V. H. Nguyen, \emph{Improved Moser--Trudinger type inequalities in the hyperbolic space $\H^n$\text}, Nonlinear Anal., {\bf 168} (2018) 67--80.

\bibitem{VHN2018a}
V. H. Nguyen, \emph{The sharp Poincar\'e--Sobolev type inequalities in the hyperbolic spaces $\H^n$\text}, J. Math. Anal. Appl., {\bf 462} (2018), no. 2, 1570--1584.

\bibitem{P1965}
S. I. Poho${\rm \check{z}}$aev, \emph{On the eigenfunctions of the equation $\Delta u + \lambda f(u) = 0$\text}, (Russian), Dokl. Akad. Nauk. SSSR, {\bf 165} (1965) 36-39.

\bibitem{RufSani}
B. Ruf, and F. Sani, \emph{Sharp Adams--type inequalities in $\R^n$\text}, Trans. Amer. Math. Soc., {\bf 365} (2013), no. 2, 645--670.

%\bibitem{Strauss}
%W. A. Strauss, \emph{Existence of solitary waves in higher dimensions\text}, Comm. Math. Phys., {\bf 55} (1977) 149--162.

\bibitem{Talenti}
G. Talenti, \emph{Best constants in Sobolev inequality\text}, Ann. Mat. Pura Appl., {\bf 110} (1976) 353--372.

%\bibitem{Tarsi2012}
%C. Tarsi, \emph{Adams's inequality and limiting Sobolev embedding into Zygmund spaces\text}, Potential Anal., {\bf 37} (4) (2012) 353--385.

\bibitem{TT}
A. Tertikas, and K. Tintarev, \emph{On existence of minimizers for the Hardy--Sobolev--Maz'ya inequality\text}, Ann. Mat. Pura Appl. (4), {\bf 186} (2007) 645--662.

\bibitem{T1967}
N. S. Trudinger, \emph{On imbedding into Orlicz spaces and some applications\text}, J. Math. Mech., {\bf 17} (1967) 473-483.

\bibitem{WY}
G. Wang, and D. Ye, \emph{A Hardy--Moser--Trudinger inequality\text}, Adv. Math., {\bf 230} (2012), no. 1, 294--320.

\bibitem{Y1961}
V. I. Yudovi${\rm \check{c}}$, \emph{Some estimates connected with integral operators and with solutions of elliptic equations\text}, (Russian), Dokl. Akad. Nauk. SSSR, {\bf 138} (1961) 805-808.
\end{thebibliography}
\end{document}